\documentclass[11pt,oneside]{amsart}

\usepackage{graphics,color,pgf,comment}
\usepackage{epsfig}

 \usepackage[ansinew]{inputenc}
 \usepackage[all]{xy}
 \usepackage{hyperref}
\newdir{ >}{!/8pt/@{}*@{>}}

\usepackage{amssymb, amsmath,amsthm, mathtools}

\usepackage[margin=1.5in]{geometry}

\theoremstyle{plain}
\newtheorem{Teorema}{Theorem}[section]
\newtheorem{thm}[Teorema]{Theorem}
\newtheorem{cor}[Teorema]{Corollary}

\newtheorem{lemma}[Teorema]{Lemma}

\newtheorem{prop}[Teorema]{Proposition}

\newtheorem{conj}[Teorema]{Conjecture}
\newtheorem*{prop*}{Proposition}
\newtheorem{prop_intro}{Proposition}
\newtheorem{thm_intro}[prop_intro]{Theorem}

\newtheorem{cor_intro}[prop_intro]{Corollary}
\newtheorem{conj_intro}[prop_intro]{Conjecture}

\theoremstyle{definition}

\newtheorem*{defn_intro}{Definition}

\theoremstyle{remark}

\newtheorem{rem}[Teorema]{Remark}



\newcommand{\calT}{\ensuremath {\mathcal{T}}}
\newcommand{\calO} {\ensuremath {\mathcal{O}}}

\newcommand{\calL} {\ensuremath {\mathcal{L}}}

\newcommand{\calM} {\ensuremath {\mathcal{M}}}


\newcommand{\R} {\ensuremath {\mathbb{R}}}

\newcommand{\matH} {\ensuremath {\mathbb{H}}}

\DeclareMathOperator{\vol}{vol}

\title[On volumes of truncated tetrahedra]{On volumes of truncated tetrahedra \\ with constrained edge lengths}

\begin{document}

\author[]{R. Frigerio}
\address{Dipartimento di Matematica, Universit\`a di Pisa, Largo B. Pontecorvo 5, 56127 Pisa, Italy}
\email{frigerio@dm.unipi.it}

\author[]{M. Moraschini}
\address{Dipartimento di Matematica, Universit\`a di Pisa, Largo B. Pontecorvo 5, 56127 Pisa, Italy}
\email{moraschini@mail.dm.unipi.it}

\keywords{truncated tetrahedron, Schl{\"a}fli formula, hyperbolic manifold, geodesic boundary, dilogarithm}
\subjclass{52A55 (primary); 52A38, 52B10, 57M50 (secondary)}

\begin{abstract}
Truncated tetrahedra are the fundamental building blocks of hyperbolic $3$-manifolds with geodesic boundary. The study of their geometric properties
(in particular, of their volume)
has applications also in other areas of low-dimensional topology, like the computation of quantum invariants of $3$-manifolds and
the use of variational methods in the study of circle packings on surfaces.

The Lobachevsky--Schl{\"a}fli formula neatly describes the behaviour of the volume of truncated tetrahedra with respect to dihedral angles, while the dependence
of volume on edge lengths is worse understood. In this paper we prove that, for every $\ell<\ell_0$, where $\ell_0$ is an explicit constant, the regular truncated tetrahedron of edge length $\ell$ maximizes the volume 
among truncated tetrahedra whose edge lengths are all not smaller than $\ell$. 

This result provides a fundamental step in the computation of the ideal simplicial
volume of an infinite family of hyperbolic $3$-manifolds with geodesic boundary.
 \end{abstract}

\maketitle

 \section*{Introduction}

 The study of the geometry of hyperbolic truncated (also known as hyper-ideal) tetrahedra
 has applications to different areas of low-dimensional topology. For example, 
such tetrahedra play a fundamental role as building blocks of hyperbolic manifolds with geodesic boundary (see e.g.~\cite{FriPe,FriMaPe2,Kojima,Kojima2});
the relationship between volumes of truncated tetrahedra and  the  growth rate of $6j$-symbols provides a bridge
 between quantum invariants and hyperbolic volume, which is of interest in the context of the volume conjecture (see e.g.~\cite{Fra1, Fra2,costantinolast,chen});
 volumes of truncated tetrahedra also come into play when studying the geometry of circle packings on surfaces~\cite{schl1,schl2,spring}, as well as 
 the geometrization and some rigidity properties of hyperbolic cone manifolds~\cite{luoold,luonew}. 
  
 Compact truncated tetrahedra are parametrized by their dihedral angles in a very clean way: any $6$-tuple of positive dihedral
 angles is realized by
  a unique isometry class of truncated tetrahedra, provided that the sum of the angles assigned to any triple of edges emanating from a single vertex is smaller than $\pi$. 
  A truncated tetrahedron is \emph{regular} if any permutation of its vertices can be realized by an isometry of the tetrahedron or, equivalently, if the dihedral angles
  along its edges are all equal to each other. Regular truncated tetrahedra play an important role when studying hyperbolic manifolds with extremal volumes. For
  example, the smallest compact $3$-manifolds with geodesic boundary are precisely those manifolds which decompose into the union of two regular truncated tetrahedra with dihedral
  angles all equal to $\pi/6$~\cite{KM,M}. More in general,
    if $\calM_g$ is the set of compact orientable hyperbolic $3$-manifolds with geodesic boundary whose boundary is given  
  by a surface of genus $g$, then the elements of $\calM_g$ having the smallest volume 
  decompose into the union of $g$ copies of the regular truncated tetrahedron with dihedral angles equal to $\pi/(3g)$~\cite{M}.
  
  The behaviour of the volume of truncated tetrahedra with respect to dihedral angles is quite well understood. The Lobachevsky--Schl{\"a}fli formula (see Equation~\eqref{Lobachevsky--Schlafli-Formula} below) 
  implies that the volume decreases as angles increase, and also allows to prove that the volume is a concave function of dihedral angles~\cite{schlenker,schl1,schl2}. 
  Moreover, there exist closed formulas (involving Spence's dilogarithm) that explicitly compute the volume in terms of dihedral angles 
  (see e.g.~\cite{Ush}). 
  
  Since the very birth  of hyperbolic geometry, a lot of work  has been devoted to the study of the hyperbolic volume of (truncated) polytopes. 
  Indeed, a formula for the volume of geodesic simplices in hyperbolic $3$-space was already found by
J.~Bolyai, and  in 1836 Lobachevsky independently proved a nice volume formula for three-dimensional hyperbolic orthoschemes, as a function of three essential angles
(a $3$-dimensional orthoscheme is a geodesic simplex whose vertices admit an ordering $v_0,v_1,v_2,v_3$ such that the edge $v_0v_1$ is orthogonal to the face $v_0v_1v_2$ and the face $v_0v_1v_2$ is orthogonal to the edge $v_2v_3$). 
In 1852, Schl{\"a}fli computed the derivative of the volume with respect to dihedral angles for spherical simplices in any dimension. His result was then extended to the hyperbolic case by Kneser.
However, as pointed out e.g.~in~\cite{kellerhals3}, for hyperbolic $3$-dimensional simplices the Schl{\"a}fli formula was already known to Lobachevsky, and this is the reason why we call it 
\emph{Lobachevsky--Schl{\"a}fli formula} in this paper.

Much more recently, building on the work by Coxeter and by B\"ohm, Kellerhals  extended these investigations to deal with the so-called \emph{complete orthoschemes}, 
that are orthoschemes in which also ideal and/or truncated vertices are allowed
(see~\cite{kellerhals1, kellerhals2}). For $n\geq 5$, each simplex in an $n$-dimensional space of constant curvature is dissectable into orthoschemes~\cite{tsch}
(the question whether the same statement holds for every $n\geq 3$, which was 
conjectured by  Hadwiger in 1956~\cite{hadwiger}, is still open~\cite[Conjecture 23]{BKKS}).
Therefore, the volume 
of a (truncated) hyperbolic tetrahedron could  be computed in principle by exploiting Kellerhals' computation of the volumes of 
the orthoschemes into which the tetrahedron decomposes (however, this machinery usually leads to difficult computations and quite involved results).

An alternative approach to the computations of volumes for truncated polytopes in hyperbolic space is described in~\cite{molnar}, where the author develops
a projective metric theory which, for example, allows him to provide a new proof of Lobachevsky--Schl\"afli differential formula.

 In all the cited works, the volume is studied as a function of dihedral angles. In fact, the behaviour of volume with respect to edge lengths is a bit more mysterious. For example, the volume is neither concave nor convex as a function of edge lengths
  (see Remark~\ref{rem-conto-numerico}). 
  A formula for the volume of hyperbolic $3$-simplices in terms of their edge lengths is described in~\cite{ushi-length}. Moreover,
  in~\cite{hovath},
   Horv\'ath recently provided a new non-elementary integral for the volume of
orthoschemes (without exploiting the Lobachevsky--Schl\"afli differential formula), using edge lengths as the only parameters.  Even if both these papers do not explicitly
consider the case when proper vertices are replaced by truncation planes, it seems likely that the formulas described there could be extended to deal
with truncated tetrahedra. Nevertheless, applying this machinery to the problem we are interested in (see Theorem~\ref{main:thm}) does not seem straightforward.

Every (internal) edge of a truncated simplex of a decomposition of a hyperbolic manifold $M$ with geodesic boundary gives rise to a so-called \emph{orthogeodesic}, i.e.~a geodesic arc 
 intersecting the boundary of $M$ orthogonally at its endpoints. Lengths of orthogeodesics define the \emph{orthospectrum},  an interesting geometric object which has proved useful in the study
 of volumes
of hyperbolic manifolds with geodesic boundary (see~\cite{basma,bridge1,bridge2,calegari1,calegari2,bridge3,bridge4}). If $M$ is compact, then the orthospectrum of $M$ admits a positive minimum, say $\ell$,
hence no edge length of any  truncated tetrahedron appearing in any decomposition of $M$ can be smaller than $\ell$; moreover, the canonical Kojima decomposition of $M$ contains a truncated polyhedron
with one edge of length exactly equal to $\ell$~\cite{Kojima,Kojima2}.

These facts motivate our interest in the following class of truncated tetrahedra:

\begin{defn_intro}\label{Tdef}
 We denote by $\calT_\ell$ the set of 
isometry classes of truncated tetrahedra whose internal edge lengths are not smaller than $\ell$. We also denote by $\Delta_\ell\in \calT_\ell$ 
the (isometry class of the) regular truncated tetrahedron with all edge lengths equal to $\ell$. 
\end{defn_intro}

We will see in Section~\ref{truncated:sec} (where the reader can find the definition of internal edge)
that for every $\ell>0$ there exists a (unique, up to isometry) truncated tetrahedron with edge lengths all equal to $\ell$. Thus, the element
$\Delta_\ell\in\calT_\ell$ is indeed well defined.

As mentioned above, the smallest compact hyperbolic manifolds with geodesic boundary decompose into the union of two regular truncated tetrahedra with dihedral angles all equal
to $\pi/6$. An easy computation using formula~\eqref{anglestolengths} shows that the edge lengths of this tetrahedron are all equal to 
$$
\ell_0=\cosh^{-1}\left(\frac{3+\sqrt{3}}{4}\right)\ .
$$

The main result of this paper shows that, for every $\ell\leq \ell_0$, the tetrahedron $\Delta_\ell$ is the unique element of $\calT_\ell$ having the maximal volume:

\begin{thm_intro}\label{main:thm}
Let $\ell\leq \ell_0$ and
let $\Delta\in\calT_\ell$. Then
$$
\vol(\Delta)\leq\vol(\Delta_\ell)\ ,
$$
and
$$
\vol(\Delta)=\vol(\Delta_\ell)
$$
if and only if $\Delta=\Delta_\ell$.
\end{thm_intro}

It is known that regular truncated tetrahedra minimize the ratio between the volume and  the total area of truncation triangles (see Section~\ref{truncated:sec} for the definition
of truncation triangle): 
if $A_\partial(\Delta)$ denotes the sum of the areas of the truncation triangles of a truncated tetrahedron $\Delta$,
then for every $\ell>0$ the map
$$
\calT_\ell\to\R\, ,\qquad \Delta\mapsto \frac{\vol(\Delta)}{A_\partial (\Delta)}
$$
attains its unique global minimum at $\Delta_\ell$ (see \cite{M,Przeworski}, where a more general statement is proved, which holds in every dimension $\geq 3$).
Putting together this result with our Theorem~\ref{main:thm}  (and using that the area of a hyperbolic triangle is equal to $\pi$ minus the sum of its angles)
we obtain the following:

\begin{cor_intro}
Let $\ell\leq \ell_0$ and
let $\Delta\in\calT_\ell$. Then
$$
A_\partial(\Delta)\leq A_\partial(\Delta_\ell)\ ,
$$
and
$$
A_\partial(\Delta)=A_\partial(\Delta_\ell)
$$
if and only if $\Delta=\Delta_\ell$. Equivalently, the sum of the dihedral angles of $\Delta$ is not smaller than the sum of the dihedral angles of $\Delta_\ell$,
and the equality holds if and only if $\Delta=\Delta_\ell$.
\end{cor_intro}
In fact, it would be worth investigating further the relationship between our main result and the study of hyperball packings in hyperbolic space, for which we
refer the reader to~\cite{szirmai}
(see also~\cite{M,Przeworski}).

Theorem~\ref{main:thm} plays a fundamental role in the computation of the ideal simplicial volume of hyperbolic $3$-manifolds with geodesic boundary carried out in~\cite{FrMo2}.
We strongly believe that our main result should hold even without the assumption $\ell\leq \ell_0$. However, this technical restriction does not affect the applications described
in~\cite{FrMo2}, while allowing us to restrict our analysis of the volume function to particular acute-angled tetrahedra (see Proposition~\ref{Angoli-Acuti-et-al} below), whose geometry is easier to understand.

Indeed, here we formulate the following:

\begin{conj_intro}\label{main:conj}
 Let $\ell$ be any positive real number and
let $\Delta\in\calT_\ell$. Then
$$
\vol(\Delta)\leq\vol(\Delta_\ell)\ ,
$$
and
$$
\vol(\Delta)=\vol(\Delta_\ell)
$$
if and only if $\Delta=\Delta_\ell$.
\end{conj_intro}

We refer the reader to Section~\ref{final1} for the discussion of two related conjectures which, if solved in the affirmative, would imply Conjecture~\ref{main:conj}.

\smallskip 

Finally, we would like to mention that similar questions may arise also when considering hyperbolic tetrahedra having some proper vertices and some truncated
vertices. Such objects arise e.g.~when studying the so-called \emph{Lambert cube tilings}. For interesting results in this context we refer the reader to~\cite{kolp-murakami}, where
the authors compute the volume of doubly truncated hyperbolic tetrahedra (see also~\cite{kellerhals2} for related results).

\section{Truncated tetrahedra}\label{truncated:sec}
Let $P$ be
a tetrahedron and let $P^*$ be the combinatorial polyhedron obtained
by removing from $P$ small open stars of the vertices. We call
\emph{lateral hexagon} and \emph{truncation triangle} the intersection of
$P^*$ respectively with a face and with the link of a vertex
of $P$.
The edges of the truncation triangles are called \emph{boundary edges},
the other edges of $P^*$ are called \emph{internal edges}.
A \emph{truncated tetrahedron} 
is a realization of $P^*$ as
a compact polyhedron $\Delta\subseteq \matH^3$ in hyperbolic space,
such that the truncation triangles are geodesic triangles,
the lateral hexagons are geodesic hexagons, and truncation triangles and lateral
hexagons lie at right angles to each other (see Figure~\ref{truncated:fig}). Truncated tetrahedra are also known in the literature
with the names of generalized tetrahedra, or hyper-ideal tetrahedra (since they may be also defined as suitable truncations of 
genuine tetrahedra whose vertices are ``hyperideal'', meaning that they live in the complement of the closure of the projective model of hyperbolic space in
the real projective $3$-space). 

\begin{center}
 \begin{figure}
  \input{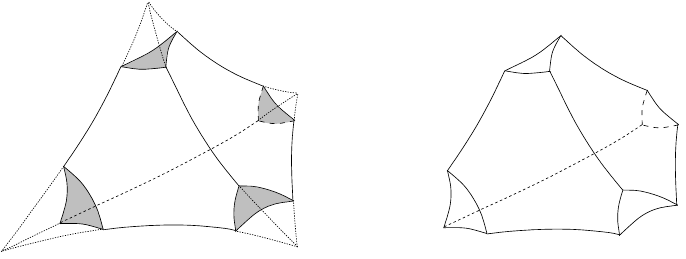_t}
  \caption{A truncated tetrahedron. The ordering of truncation triangles on the left induces the labelling of internal edges on the right.}
  \label{truncated:fig}
 \end{figure}
\end{center}

\subsection{Parametrizing truncated tetrahedra}

A marking for a truncated tetrahedron is an ordering of its truncation triangles.
We denote by $\calT$ the space of isometry classes of marked truncated tetrahedra, i.e.~the set of equivalence classes of marked truncated tetrahedra,
where two tetrahedra are equivalent if they are isometric via an isometry which preserves the markings. If $\Delta$ is a marked truncated tetrahedron, we will denote
by $e_{ij}$, $1\leq i<j\leq 4$,  the internal edge with endpoints on the $i$-th and the $j$-th truncation triangle.

It is well-known that truncated tetrahedra are completely determined by their dihedral angles. 
 More precisely, for every $1\leq i<j\leq 4$,  let us define the function $\theta_{ij}\colon \calT\to \R$ such that $\theta_{ij}(\Delta)$ is the
 dihedral angle of $\Delta$ along $e_{ij}$, and set
 $$
 \Theta\colon \calT\to \R^6\, ,\qquad \Theta(\Delta)=(\theta_{12}(\Delta),\theta_{13}(\Delta),\theta_{14}(\Delta),\theta_{34}(\Delta),\theta_{24}(\Delta),\theta_{23}(\Delta))\ .
 $$
 If we denote by $\mathcal{O}$
 the convex open subset of ${\R}^6$ given by the $6$-tuples $(x_1,\ldots,x_6)$ of positive numbers that satisfy the system
 $$
\left\{
 \begin{array}{lcl}
 x_1+x_2+x_3&<&\pi\\
 x_1+x_5+x_6&<&\pi\\
 x_2+x_4+x_6&<&\pi\\
 x_3+x_4+x_5&<&\pi\ ,
\end{array}\right.
 $$
 then the map $\Theta$ establishes a bijection between $\calT$ and $\mathcal{O}$ (see e.g.~\cite{FriPe}). We can therefore endow $\calT$ with the structure of differentiable manifold
 for which the map $\Theta\colon \calT\to \calO$ is a diffeomorphism (hence, a chart).

 Truncated tetrahedra are determined also by their edge lengths, since dihedral angles may be expressed in terms of edge lengths (and viceversa) as follows.
For every $1\leq i<j\leq 4$,  let  $$\ell_{ij}\colon \calT\to \R$$ be the map which sends the (marked isometry class of) $\Delta$ to the hyperbolic length
of its edge $e_{ij}$.
Let us then set, for every $i\in\{1,2,3,4\}$, 
\begin{equation}\label{dtheta}
d_{i}\,=\,2\cos \theta_{ij} \cos\theta_{il} \cos\theta_{ik}+ \cos^2 \theta_{ij}
+\cos^2 \theta_{il}+\cos^2 \theta_{ik} -1\ ,
\end{equation}
\begin{equation}\label{zl}
z_i=2\cosh \ell_{jk}\cosh\ell_{kl}\cosh\ell_{lj}+\cosh^2\ell_{jk}+\cosh^2\ell_{kl}+\cosh^2\ell_{lj}-1\ , 
\end{equation}
where $j,k,l$ are such that $\{i,j,k,l\}=\{1,2,3,4\}$.
We then have the following equalities (see e.g.~\cite[Proposition 2.6]{FriPe} for the formula expressing lengths in terms of angles; the formula computing angles
in terms of lengths may be deduced in a very similar way):

\begin{equation}\label{anglestolengths}
\cosh \ell_{ij}={c_{ij}}\,\Big/\,{\sqrt{d_{i}
 d_{j}}}\ ,
\end{equation}
where
\begin{equation}\label{ctheta}
\begin{array}{rcl}
c_{ij}\!\!&=&\!\!\cos \theta_{ij} \left( \cos \theta_{il} \cos\theta_{jk}
        +\cos \theta_{ik} \cos\theta_{jl}\right)\\
        & & + \cos \theta_{il} \cos\theta_{jl}
        +\cos \theta_{ik} \cos\theta_{jk} + \cos\theta_{kl} \sin^2 \theta_{ij}\ ,
\end{array}
\end{equation}
and
\begin{equation}\label{lengthstoangles}
\cos \theta_{ij}={w_{ij}}\,\Big/\,{\sqrt{z_{k}
 z_{l}}}\ ,
\end{equation}
where
$$
\begin{array}{rcl}
w_{ij}\!\!&=&\!\!\cosh \ell_{ij} \left( \cosh \ell_{il} \cosh\ell_{jk}
        +\cosh \ell_{ik} \cosh\ell_{jl}\right)\\
        & & + \cosh \ell_{ik} \cosh\ell_{il}
        +\cosh \ell_{jk} \cosh\ell_{jl} - \sinh^2\ell_{ij} \cosh \ell_{kl}\ .
\end{array}\label{wl}
$$
As a consequence, if we set 
$$L\colon \calT\to \R^6\, ,\qquad L(\Delta)=(\ell_{12}(\Delta),\ell_{13}(\Delta),\ell_{14}(\Delta),
\ell_{34}(\Delta),\ell_{24}(\Delta),\ell_{23}(\Delta))\ ,
$$
and
$$
\mathcal{L}=L(\calT)\ ,
$$
then $\calL$ is an open subset of $\R^6$, and the map $L\colon \calT\to \calL$ is a diffeomorphism (hence, a chart for
$\calT$).

 
 \begin{lemma}[{\cite[Corollary 4.9]{luonew}}]\label{continuous:ext}
 The map $$ \Theta\circ L^{-1}\colon \mathcal{L}\to \mathcal{O}$$
 continuously extend to a function $\overline{\calL}\to\overline{\calO}$, 
where 
  $\overline{\calL}$ (resp.~$\overline\calO$) denotes the closure  of $\calL$ (resp.~of $\calO$) in $\R^6$.
 \end{lemma}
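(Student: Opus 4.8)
The plan is to build the extension explicitly from the length-to-angle formulas~\eqref{lengthstoangles}, rather than invoking any abstract compactness. For $\Delta\in\calT$ the composition $\Theta\circ L^{-1}$ simply reads off the dihedral angles from the edge lengths, and by~\eqref{lengthstoangles} its $ij$-th entry satisfies $\cos\theta_{ij}=w_{ij}/\sqrt{z_k z_l}$, where $\{i,j,k,l\}=\{1,2,3,4\}$ and $w_{ij}$, $z_k$, $z_l$ are universal polynomial expressions in the quantities $\cosh\ell_{pq}$ and $\sinh\ell_{pq}$. Since every length coordinate is positive on $\calL$, it is non-negative on $\overline\calL$, so $\cosh\ell_{pq}\ge 1$ throughout $\overline\calL$. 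Feeding this into~\eqref{zl} gives $z_i\ge 2+1+1+1-1=4>0$ at every point of $\overline\calL$; in particular the denominators $\sqrt{z_k z_l}$ never vanish and vary continuously, so each function $(\ell_{pq})\mapsto w_{ij}/\sqrt{z_k z_l}$ is continuous on all of $\overline\calL$.

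First I would define a candidate extension $\widehat\Theta\colon\overline\calL\to\R^6$ whose $ij$-th entry is $\arccos\bigl(w_{ij}/\sqrt{z_k z_l}\bigr)$. On $\calL$ the dihedral angles lie in $(0,\pi)$, hence $\theta_{ij}=\arccos(\cos\theta_{ij})$ there; combined with~\eqref{lengthstoangles} this shows that $\widehat\Theta$ coincides with $\Theta\circ L^{-1}$ on $\calL$, so it is genuinely an extension. It then remains to check continuity on the closure. The one delicate point is that $\arccos$ is defined and continuous only on $[-1,1]$, so I must verify that the argument $w_{ij}/\sqrt{z_k z_l}$ stays in this interval on $\overline\calL$. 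On $\calL$ it equals $\cos\theta_{ij}\in(-1,1)$; since this argument is a continuous function on $\overline\calL$ (by the previous paragraph) and $[-1,1]$ is closed, its values on $\overline\calL$ still lie in $[-1,1]$. Consequently $\widehat\Theta$ is a composition of continuous maps, hence continuous on $\overline\calL$.

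Finally I would confirm that $\widehat\Theta$ actually takes values in $\overline\calO$. This is automatic from continuity: $\widehat\Theta(\calL)=(\Theta\circ L^{-1})(\calL)=\Theta(\calT)=\calO$, so $\widehat\Theta(\overline\calL)\subseteq\overline{\widehat\Theta(\calL)}=\overline\calO$, which yields the desired continuous extension $\overline\calL\to\overline\calO$. The genuinely subtle step is the one isolating $[-1,1]$: a priori the explicit rational-trigonometric formula could leave the domain of $\arccos$ at a degenerate boundary configuration, and it is precisely the interpretation of the argument as the cosine of an interior dihedral angle, together with the closedness of $[-1,1]$, that rules this out. By contrast the positivity bound $z_i\ge 4$ is what guarantees we never divide by zero and keeps the whole argument within elementary continuity.
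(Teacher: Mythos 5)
Your argument is correct. Every step checks out: $\overline\calL\subseteq[0,+\infty)^6$ since $\calL\subseteq(0,+\infty)^6$, so $\cosh\ell_{pq}\ge 1$ and hence $z_i\ge 4$ on all of $\overline\calL$, making each quotient $w_{ij}/\sqrt{z_kz_l}$ continuous there; the inclusion $f(\overline\calL)\subseteq\overline{f(\calL)}\subseteq[-1,1]$ is the standard fact $f(\overline A)\subseteq\overline{f(A)}$ for continuous $f$; and since each $\theta_{ij}\in(0,\pi)$ on $\calO$, composing with $\arccos$ recovers $\Theta\circ L^{-1}$ on $\calL$. The same closure argument puts the image in $\overline\calO$. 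The one thing worth noting is that the paper does not prove this lemma at all: it is imported wholesale as Corollary 4.9 of Luo--Yang, whose proof sits inside a broader analysis of degenerating hyperbolic polyhedra. Your route is therefore genuinely different in character --- a short, self-contained verification that the explicit length-to-angle formula~\eqref{lengthstoangles} already makes sense and is continuous on the closure, with the only input being that formula (which the paper states as established background). What your approach buys is independence from the external reference and an explicit formula for the extension; what it does not give (and does not need to give) is the finer boundary information that Luo's framework provides and that the paper later uses separately, namely the description of $\partial\calL$ in~\cite[Proposition 4.5]{luonew} invoked in Proposition~\ref{boundarypoints}. As a proof of the lemma as stated, your argument is complete.
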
 

\begin{rem}\label{flat:rem}
It is worth mentioning that the composition $L\circ\Theta^{-1}\colon \calO\to\calL$ does \emph{not} extend to a continuous function on $\calO$.
For example, when $\theta$ tends to $\pi/3$, the edge lengths of the regular truncated tetrahedron with dihedral angles all equal to $\theta$ diverge to $\infty$.
Even more delicate phenomena may occur. For example, let $W\subseteq \mathbb{H}^2$ be a right-angled octagon with cyclically ordered edges
$a_1,b_1,a_2,b_2,a_3,b_3,a_4,b_4$ and let $a_5\subseteq \mathbb{H}^2$ (resp.~$a_6\subseteq \mathbb{H}^2$) be the shortest geodesic segment joining 
$b_1$ and $b_3$ (resp.~$b_2$ and $b_4$) -- see Figure~\ref{octagon:fig}. If we embed $\mathbb{H}^2$ as a geodesic plane into $\mathbb{H}^3$,  
then it is not difficult to 
slightly deform $W$ into a 
truncated tetrahedron in $\mathbb{H}^3$, in such a way that the internal edges of the tetrahedron are given by the deformations of the $a_i$. 
As a consequence, if $\ell_i$ denotes the hyperbolic length of $a_i$, then the $6$-tuple $\overline{\ell}=(\ell_1,\ell_2,\ell_5,\ell_3,\ell_4,\ell_6)$ belongs to $\partial \calL$. 
Moreover, the continuous extension of $\Theta\circ L^{-1}$ maps any $6$-tuple $\overline{\ell}$ obtained in this way to $(0,0,\pi,0,0,\pi)$. As a consequence, no continuous extension of $L\circ\Theta^{-1}$
can be defined at $(0,0,\pi,0,0,\pi)$. 

For further results on the  funny behaviour of sequences of truncated tetrahedra degenerating into flat and/or non-compact polyhedra we refer the reader to~\cite{Miyamoto:degen}.
\end{rem}

\begin{center}
 \begin{figure}
  \input{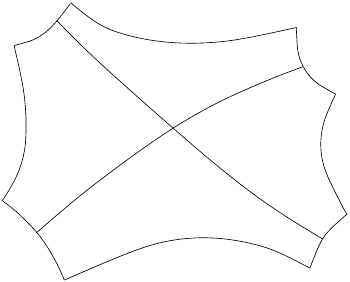_t}
  \caption{A right-angled octagon may be seen as the limit point of a sequence of truncated tetrahedra that are becoming flat.}
  \label{octagon:fig}
 \end{figure}
\end{center}

\begin{rem}
The space $\calL$ of parameters describing truncated tetrahedra in terms of their edge lengths is more complicated than $\mathcal{O}$. For example, 
it is non-convex~\cite{luoold,luonew}. We refer the reader to~\cite[Proposition 4.5]{luonew} for a thorough description
of the spaces $\calL$ and $\partial\calL$.
\end{rem}

\section{The volume function}

We denote by 
$$
\vol\colon \calT\to \R
$$
the volume function, i.e.~the function which associates to any (marked isometry class of a) truncated tetrahedron its Riemannian volume. 
Recall that for every $\ell>0$ we denote by $\calT_\ell\subseteq \calT$ the space of truncated tetrahedra whose edge lengths are all not smaller than $\ell$, i.e.
$$
\calT_\ell=\{\Delta\in\calT\, |\, \ell_{ij}(\Delta)\geq \ell\ \textrm{for\ every}\ i,j\}\ .
$$
Our main theorem may be restated as follows: for every $\ell\leq \ell_0$, 
the function
 $$
 \vol\colon \calT_{\ell}\to \mathbb{R}
 $$
 has a unique global maximum, which is attained at the regular tetrahedron $\Delta_{\ell}$.

 Volumes of (truncated) hyperbolic tetrahedra satisfy the fundamental Lobachevsky--Schl{\"a}fli formula, which asserts that 
 \begin{equation}\label{Lobachevsky--Schlafli-Formula}
d{\vol} = - \frac{1}{2} \sum_{1\leq i<j\leq 4} \ell_{ij} d \theta_{ij}
\end{equation}
(see~\cite{Milnor:schla} for a proof of the formula for general polytopes in hyperbolic space and~\cite{schlenker,schl1,schl2} for a discussion of its application
to truncated tetrahedra). 

Henceforth, in order to avoid heavy notation we set $V_\Theta=\vol \circ \Theta^{-1}\colon \calO\to\R$ and
$V_L=\vol\circ L^{-1}\colon \calL\to\R$. Moreover, we simply denote by $\partial \vol/\partial \theta_{ij}$ the partial derivatives of $V_\Theta$, considered as functions
on $\calT$, i.e.~we set $\partial \vol/\partial \theta_{ij}=(\partial V_\Theta/\partial \theta_{ij})\circ \Theta$, and similarly
for $\partial \vol/\partial \ell_{ij}$. 

Let us point out three consequences of the 
Lobachevsky--Schl{\"a}fli differential formula (and of the concavity of the volume with respect to dihedral angles) that will prove useful later:

\begin{lemma}\label{decrease:increase}
Let $\Delta,\Delta'\in\calT$ be such that $\theta_{ij}(\Delta)\leq \theta_{ij}(\Delta')$ for every
$1\leq i<j\leq 4$. Then 
 $$
 \vol(\Delta)\geq \vol(\Delta')\ ,
 $$
 and the inequality is strict unless $\theta_{ij}(\Delta)= \theta_{ij}(\Delta')$ for every
$1\leq i<j\leq 4$.
\end{lemma}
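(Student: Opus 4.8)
The plan is to exploit the convexity of the parameter space $\calO$ together with the Schl{\"a}fli formula~\eqref{Schlafli-Formula}. Set $x=\Theta(\Delta)$ and $x'=\Theta(\Delta')$, and write $x_{ij},x'_{ij}$ for their coordinates, so that the hypothesis reads $x_{ij}\leq x'_{ij}$ for every $1\leq i<j\leq 4$. Since $\calO$ is convex, the segment $\gamma(t)=(1-t)x+tx'$, $t\in[0,1]$, lies entirely in $\calO$, and $\Delta_t:=\Theta^{-1}(\gamma(t))$ defines a smooth path of truncated tetrahedra joining $\Delta=\Delta_0$ to $\Delta'=\Delta_1$ along which every dihedral angle $\theta_{ij}(\Delta_t)=(1-t)x_{ij}+tx'_{ij}$ is a weakly increasing affine function of $t$.

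Next I would differentiate the volume along this path. Writing $V_\Theta=\vol\circ\Theta^{-1}$ and applying the chain rule together with the Schl{\"a}fli formula, one obtains
$$
\frac{d}{dt}V_\Theta(\gamma(t))=\sum_{1\leq i<j\leq 4}\frac{\partial V_\Theta}{\partial \theta_{ij}}(\gamma(t))\,(x'_{ij}-x_{ij})=-\frac{1}{2}\sum_{1\leq i<j\leq 4}\ell_{ij}(\Delta_t)\,(x'_{ij}-x_{ij}).
$$
Because each internal edge length $\ell_{ij}(\Delta_t)$ is strictly positive and each increment $x'_{ij}-x_{ij}$ is nonnegative by hypothesis, every summand is nonnegative, whence the derivative is everywhere $\leq 0$. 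Integrating over $[0,1]$ yields $\vol(\Delta')=V_\Theta(x')\leq V_\Theta(x)=\vol(\Delta)$, which is the desired inequality.

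For the equality case I would argue as follows. If $x=x'$ then $\Delta=\Delta'$ and the volumes trivially agree. Conversely, if $x\neq x'$, then $x'_{ij}-x_{ij}>0$ for at least one pair $(i,j)$; since $\ell_{ij}$ is continuous and strictly positive on the compact path $\gamma([0,1])$, the corresponding summand is bounded below by a positive constant, so $\frac{d}{dt}V_\Theta(\gamma(t))$ is strictly negative for every $t$, and integrating gives the strict inequality $\vol(\Delta')<\vol(\Delta)$.

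I do not expect a genuine obstacle here: the whole argument is a one–variable integration of the Schl{\"a}fli formula along a straight segment, and the only facts needed beyond~\eqref{Schlafli-Formula} are the convexity of $\calO$ (already recorded) and the positivity of the internal edge lengths (immediate from their definition as hyperbolic lengths). The single point requiring a little care is the strictness, where one must use the positivity of $\ell_{ij}$ \emph{uniformly} along the compact path rather than merely pointwise.
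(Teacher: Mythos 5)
Your argument is correct and is precisely the one the paper intends: the paper's proof simply says the claim ``readily follows'' from the Schl{\"a}fli formula together with the convexity of $\calO$, and your write-up fills in exactly those details (integrating $d\vol=-\tfrac12\sum\ell_{ij}\,d\theta_{ij}$ along the straight segment in $\calO$ and using $\ell_{ij}>0$ for the sign and the strictness).
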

\begin{proof}
For $1\leq i<j\leq 4$, let $\overline{\theta}_{ij}\colon [0,1]\to \mathbb{R}$ be defined by $\overline{\theta}_{ij}(t)=t\theta_{ij}(\Delta)+(1-t)\theta_{ij}(\Delta')$.
Since $\calO$ is convex, for every $t\in [0,1]$ we can consider the truncated tetrahedron $\Delta_t$ such that $\theta_{ij}(\Delta_t)=\overline{\theta}_{ij}(t)$
for every $1\leq i<j\leq 4$. Since $\theta_{ij}(\Delta)\leq \theta_{ij}(\Delta')$, 
the function $\overline{\theta}_{ij}$ is weakly decreasing, and it is strictly decreasing if $\theta_{ij}(\Delta)< \theta_{ij}(\Delta')$. Therefore,
from equation~\eqref{Lobachevsky--Schlafli-Formula} we obtain 
$$
\frac{d\vol(\Delta_t)}{dt}\geq 0
$$
for every $t\in [0,1]$, and the inequality is strict unless $\theta_{ij}(\Delta)= \theta_{ij}(\Delta')$ for every
$1\leq i<j\leq 4$. This implies that 
$$
 \vol(\Delta)=\vol(\Delta_1)\geq \vol(\Delta_0)= \vol(\Delta')\ ,
 $$
 and the inequality is strict unless $\theta_{ij}(\Delta)= \theta_{ij}(\Delta')$ for every
$1\leq i<j\leq 4$.
\end{proof}

\begin{lemma}\label{monotone}
The map
 $$
 \R\to\R\, ,\qquad \ell \mapsto \vol(\Delta_\ell)
 $$
 is strictly decreasing.
\end{lemma}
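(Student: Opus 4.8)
The plan is to reduce to the one-parameter family of regular truncated tetrahedra and to combine the Schl\"afli formula~\eqref{Schlafli-Formula} with an explicit description of how the common edge length depends on the common dihedral angle. By the definition of regularity, for every $\ell>0$ all six dihedral angles of $\Delta_\ell$ coincide, so under the chart $\Theta$ the point $\Delta_\ell$ is sent to $(\theta,\theta,\theta,\theta,\theta,\theta)$ for a single value $\theta=\theta(\ell)\in(0,\pi/3)$ (the four constraints defining $\calO$ all reduce to $3\theta<\pi$). First I would substitute $\theta_{ij}=\theta$ for all $i,j$ into formulas~\eqref{dtheta} and~\eqref{ctheta} and plug the outcome into~\eqref{anglestolengths}. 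Writing $c=\cos\theta$, a direct simplification gives $d_i=(c+1)^2(2c-1)$ and $c_{ij}=c(c+1)^2$ for all indices, whence $\sqrt{d_id_j}=d_i$ and
\[
\cosh\ell=\frac{\cos\theta}{2\cos\theta-1}.
\]
This single scalar identity encodes the whole family, and I would use it both to see that $\theta\mapsto\ell$ is a smooth bijection of $(0,\pi/3)$ onto $(0,\infty)$ --- consistent with the divergence $\ell\to\infty$ as $\theta\to\pi/3$ recorded in Remark~\ref{flat:rem} --- and to compute its derivative.

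Next I would establish the monotonicity of this correspondence. Differentiating the identity above yields
\[
\sinh\ell\,\frac{d\ell}{d\theta}=\frac{\sin\theta}{(2\cos\theta-1)^2},
\]
and since $\sin\theta>0$, $2\cos\theta-1>0$ and $\sinh\ell>0$ for $\theta\in(0,\pi/3)$, we obtain $d\ell/d\theta>0$. Hence $\ell$ is a strictly increasing smooth function of $\theta$ (equivalently, the inverse $\theta=\theta(\ell)$ is strictly increasing and smooth), so in particular the composition $\ell\mapsto\vol(\Delta_\ell)=V_\Theta(\theta(\ell),\dots,\theta(\ell))$ is smooth.

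Finally I would apply the Schl\"afli formula along the curve $\theta\mapsto\Delta_{\ell(\theta)}$. Since all six dihedral angles vary together and all six edge lengths equal $\ell$, formula~\eqref{Schlafli-Formula} specializes to
\[
\frac{d}{d\theta}\,\vol\bigl(\Delta_{\ell(\theta)}\bigr)=-\frac12\sum_{1\le i<j\le4}\ell_{ij}\,\frac{d\theta_{ij}}{d\theta}=-\frac12\cdot 6\ell=-3\ell<0 .
\]
Thus the volume is strictly decreasing in $\theta$; combined with $d\ell/d\theta>0$ from the previous step, this shows that $\vol(\Delta_\ell)$ is strictly decreasing in $\ell$, as claimed. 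The only genuinely computational point --- and the step most prone to error --- is the algebraic simplification leading to $\cosh\ell=\cos\theta/(2\cos\theta-1)$, together with the correct bookkeeping of the sign in the specialized Schl\"afli identity; once these are in hand the conclusion is immediate.
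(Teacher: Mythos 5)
Your proof is correct and follows essentially the same route as the paper: both establish that edge length and dihedral angle vary monotonically together along the regular family and then invoke the Schl\"afli formula to conclude that the volume decreases (the paper cites formula~\eqref{lengthstoangles} and Lemma~\ref{decrease:increase}, while you differentiate the explicit relation $\cosh\ell=\cos\theta/(2\cos\theta-1)$ and apply~\eqref{Schlafli-Formula} directly). Your closed-form computation checks out --- it reproduces $\cosh\ell_0=(3+\sqrt3)/4$ at $\theta=\pi/6$ --- so the extra explicitness is harmless, just more work than the paper's two-line argument.
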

\begin{proof}
By using formula~\eqref{lengthstoangles} it is easily seen that 
the map $\ell\mapsto \theta_{ij}(\Delta_\ell)$ is strictly increasing for every $1\leq i<j\leq 4$, so the conclusion follows from Lemma~\ref{decrease:increase}.
\end{proof}

\begin{prop}\label{prop:volume:max:reg:somma:ang:uguale}
Let $0<\theta<2\pi$. Then
the maximum volume among the truncated tetrahedra in $$A_{\theta} = \left\{\Delta \in \mathcal{T}\ ,\ \sum_{1 \leq i<j \leq 4} \theta_{ij}(\Delta) = \theta\right\}$$ is attained at the regular truncated tetrahedron with dihedral angles all equal to $\theta/6$.
\end{prop}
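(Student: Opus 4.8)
The plan is to derive the statement from the two structural properties of the volume recalled in the introduction: $V_\Theta = \vol\circ\Theta^{-1}$ is \emph{concave} on the convex domain $\calO$, and it is invariant under the natural $S_4$-action that relabels the four truncation triangles. Indeed, any permutation $\sigma\in S_4$ of $\{1,2,3,4\}$ induces an isometric re-marking of a truncated tetrahedron, hence a permutation of the six edge-coordinates of $\calO$; since isometric tetrahedra have equal volume, $V_\Theta$ is constant on each orbit. Moreover, the four inequalities cutting out $\calO$ are exactly the four vertex conditions ``sum of the three angles at a vertex is $<\pi$'', so $S_4$ permutes them among themselves and $\calO$ is $S_4$-invariant; the constraint $\sum_{i<j}\theta_{ij}=\theta$ defining $A_\theta$ is preserved as well.

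First I would fix $\Delta\in A_\theta$, set $\mathbf{x}=\Theta(\Delta)\in\calO$, and average its orbit, putting $\overline{\mathbf{x}}=\frac{1}{24}\sum_{\sigma\in S_4}\sigma\cdot\mathbf{x}$. By convexity of $\calO$ this average lies in $\calO$, and it still satisfies $\sum_i x_i=\theta$ because each summand does. The key combinatorial point is that $S_4$ acts \emph{transitively} on the six edges (equivalently, on the six $2$-element subsets of $\{1,2,3,4\}$), with each edge-to-edge transition realized by exactly four permutations; hence every coordinate of $\overline{\mathbf{x}}$ equals the common value $\frac{1}{6}\sum_i x_i=\theta/6$. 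Thus $\overline{\mathbf{x}}=(\theta/6,\dots,\theta/6)$ is precisely the point of $\calO$ corresponding to the regular truncated tetrahedron $\Delta_{\mathrm{reg}}$ with all dihedral angles equal to $\theta/6$; this point does satisfy the four inequalities exactly because $3\cdot(\theta/6)=\theta/2<\pi$, which is where the hypothesis $\theta<2\pi$ is used.

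Then I would invoke concavity through Jensen's inequality: since $V_\Theta(\sigma\cdot\mathbf{x})=V_\Theta(\mathbf{x})$ for every $\sigma$,
\[
\vol(\Delta_{\mathrm{reg}})=V_\Theta(\overline{\mathbf{x}})\ \geq\ \frac{1}{24}\sum_{\sigma\in S_4}V_\Theta(\sigma\cdot\mathbf{x})\ =\ V_\Theta(\mathbf{x})\ =\ \vol(\Delta).
\]
As $\Delta\in A_\theta$ was arbitrary, this shows that $\Delta_{\mathrm{reg}}$ attains the maximal volume on $A_\theta$, which is the assertion.

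The appeal of this route is that it sidesteps the analytic difficulty one would otherwise face: the set $A_\theta=\{\sum=\theta\}\cap\calO$ is bounded but only relatively open, so a direct extremal argument would have to rule out escape of the supremum to $\partial\calO$, whereas the symmetrization produces the extremal competitor explicitly. The only inputs requiring care are the three structural facts used above, and of these the genuinely nontrivial one is the concavity of $V_\Theta$, which I would borrow from the cited literature; the $S_4$-invariance of both $V_\Theta$ and $\calO$ is immediate from the definition of the chart $\Theta$ and of the four vertex inequalities. I would state the result in the existence form above, since Jensen's inequality yields only ``$\geq$''; upgrading to uniqueness of the maximizer would require \emph{strict} concavity of $V_\Theta$, which forces equality only when the whole orbit collapses, i.e.\ only for the regular tetrahedron.
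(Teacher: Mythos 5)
Your proof is correct, but it takes a genuinely different route from the paper's. The paper also starts from the strict concavity of $V_\Theta$ on the convex set $\Theta(A_\theta)$, but then argues via first-order conditions: a point of $A_\theta$ is critical for the constrained problem if and only if all the partial derivatives $\partial\vol/\partial\theta_{ij}$ coincide, and by the Schl{\"a}fli formula these derivatives equal $-\ell_{ij}/2$, so criticality is equivalent to all edge lengths being equal, i.e.\ to regularity; strict concavity then promotes this unique critical point to the unique global maximum. Your symmetrization-plus-Jensen argument replaces the Schl{\"a}fli formula and the critical-point analysis by the $\mathfrak{S}_4$-invariance of $V_\Theta$ and of $\calO$ (which the paper itself records in its concluding remarks), and it has the virtue of exhibiting the maximizer explicitly, with no need to worry about attainment of the supremum on the relatively open set $A_\theta$. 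All the structural inputs you use check out: $\calO$ is convex and its four defining inequalities are exactly the four vertex conditions, hence are permuted by $\mathfrak{S}_4$; the action on the six edge-coordinates is transitive with stabilizers of order four, so the orbit average is indeed $(\theta/6,\dots,\theta/6)$; and the constraint $\sum\theta_{ij}=\theta$ is preserved. What your version loses is only the uniqueness of the maximizer, which, as you note, is recovered from the equality case of Jensen for a strictly concave function (equality forces the orbit to collapse, i.e.\ $\Theta(\Delta)=(\theta/6,\dots,\theta/6)$); the paper's route gets uniqueness for free and stays closer to the Schl{\"a}fli machinery used throughout the rest of the argument.
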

\begin{proof}
Recall from~\cite{schlenker,schl1,schl2} that the function $V_\Theta\colon \calO\to \R$ is strictly concave. Therefore, since $\Theta(A_\theta)$ is a convex submanifold of $\calO$,
it suffices to show that $(\theta/6,\ldots,\theta/6)$ is the unique critical point of the restriction of $V_\Theta$ to $\Theta(A_\theta)$, or, equivalently, that the
unique critical point of the restriction of $\vol$ to $A_\theta$ is given by (the isometry class of) a regular tetrahedron.

Since $A_\theta$ is defined by the constraint $\sum_{1 \leq i < j\leq 4} \theta_{ij} = \theta$, a point $\Delta\in A_\theta$ is critical for the restriction
of $\vol$ to $A_\theta$ if and only if 
$$ \frac{\partial \vol}{\partial\theta_{ij}}(\Delta)=\frac{\partial \vol}{\partial\theta_{hk}}(\Delta)\quad \textrm{for\ every}\ 1\leq i<j\leq 4\, ,\ 1\leq h<k\leq 4\ .
$$
By the Lobachevsky--Schl{\"a}fli formula, this condition is equivalent to the fact that the egde lengths of $\Delta$ are all equal to each other, and this concludes the proof.
\end{proof}

\subsection{The behaviour of volume at $\partial \calL$}
It is known that, when considered as a function of dihedral angles, the volume continuously extends at points in $\partial\calO$:

\begin{thm}\cite{Rivin}\label{Rivin}
The function $$V_\Theta\colon \calO\to\R$$
continuously extends to $\overline{\calO}$.
\end{thm}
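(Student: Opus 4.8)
The plan is to construct the extension as the \emph{closure} (in the sense of convex analysis) of the concave function $V_\Theta$, and to deduce its continuity from the fact that $\overline{\calO}$ is a polytope. Note first that $\calO$ is a bounded convex open set cut out by finitely many linear inequalities, so $\overline{\calO}$ is a compact polytope with $\calO=\operatorname{int}\overline{\calO}$. Recall also that $V_\Theta$ is strictly concave on $\calO$ (see \cite{schlenker,schl1,schl2}, already used in the proof of Proposition~\ref{prop:volume:max:reg:somma:ang:uguale}). The whole argument then needs only one genuinely geometric input, namely that $V_\Theta$ is bounded.

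To see boundedness, observe that $0<V_\Theta$ trivially, while for the upper bound Lemma~\ref{decrease:increase} shows that $V_\Theta$ is nonincreasing in each dihedral angle, so that its supremum over $\calO$ is approached as all angles tend to $0$, i.e.\ towards the vertex $0\in\overline{\calO}$. The corresponding limit configuration is the regular truncated tetrahedron with all dihedral angles equal to $0$, which has finite volume; this is the classical fact that the volumes of truncated tetrahedra are uniformly bounded, and it gives $0<V_\Theta\le C<\infty$ on $\calO$.

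Now set $g=-V_\Theta$, a convex function with $\operatorname{dom}g=\calO$, and let $h=\operatorname{cl}g$ be its closure. By the standard theory of convex functions, $h$ is convex and lower semicontinuous, it agrees with $g$ on $\operatorname{ri}(\operatorname{dom}g)=\calO$, and, because $g$ is bounded, $h$ is finite on $\operatorname{cl}(\operatorname{dom}g)=\overline{\calO}$; moreover, for each $\overline{x}\in\partial\calO$ and each $y\in\calO$ one has $h(\overline{x})=\lim_{t\to 0^+}g\bigl((1-t)\overline{x}+ty\bigr)$, so that the boundary values of $h$ are exactly the direction-independent radial limits of the volume. Setting $\overline{V_\Theta}=-h$ thus produces a finite function on $\overline{\calO}$ which restricts to $V_\Theta$ on $\calO$ and whose boundary values are the expected geometric limits.

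It remains to upgrade $\overline{V_\Theta}$ to a continuous function. Here I would invoke the convex-analytic fact that a convex function which is finite on a polytope is upper semicontinuous relative to that polytope: applied to $h$ on $\overline{\calO}$, it yields that $h$ is upper semicontinuous relative to $\overline{\calO}$. Being simultaneously lower semicontinuous (as a closure), $h$ is therefore continuous on $\overline{\calO}$, and hence so is $\overline{V_\Theta}$. The main obstacle is the boundedness step, which is the only place where the geometry of degenerating truncated tetrahedra really enters (as one approaches $\partial\calO$ an edge length $\ell_{ij}$ diverges exactly as $\theta_{ij}\to0$, while a truncation triangle collapses to an ideal vertex as one of the triple-sum constraints becomes an equality); one must also check that the relevant semicontinuity theorem applies, where it is essential that $\overline{\calO}$ be \emph{polyhedral}, since closed convex functions may fail to be continuous on non-polyhedral closed domains. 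An alternative, more computational route avoids convex analysis altogether and integrates the Schl{\"a}fli formula~\eqref{Schlafli-Formula} along segments terminating on $\partial\calO$; there the delicate point is precisely the divergence $\ell_{ij}\to\infty$ as $\theta_{ij}\to0$, which is however only logarithmic and hence integrable, so that the volume varies continuously up to the boundary.
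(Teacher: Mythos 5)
The paper offers no proof of this statement at all: it is imported verbatim from \cite{Rivin} as an external input, so there is nothing internal to compare your argument with. Your convex-analytic route is therefore a genuinely independent, self-contained proof, and its skeleton is sound: $\overline{\calO}$ is a compact polytope whose interior is $\calO$, $V_\Theta$ is concave there, the closure $h=\operatorname{cl}(-V_\Theta)$ is a lower semicontinuous convex function agreeing with $-V_\Theta$ on $\calO$ and finite on $\overline{\calO}$ once boundedness is known, and the Gale--Klee--Rockafellar theorem (convex functions finite on polyhedral sets are upper semicontinuous there) upgrades $h$ to a continuous function on $\overline{\calO}$; you correctly flag that polyhedrality of the domain is essential here. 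What this buys is a short proof whose only inputs are the concavity already used in Proposition~\ref{prop:volume:max:reg:somma:ang:uguale} and a uniform volume bound.

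Two points need repair. First, your boundedness argument is circular as written: asserting that the supremum equals ``the volume of the limit configuration with all angles $0$'' presupposes precisely the continuous extension at the vertex $0\in\partial\calO$ that you are trying to prove. Replace it with a direct bound: a truncated tetrahedron is a compact convex geodesic polyhedron with $12$ vertices, hence decomposes into a uniformly bounded number of compact geodesic tetrahedra, each of volume less than the maximal volume of a hyperbolic tetrahedron; combined with the monotonicity of Lemma~\ref{decrease:increase} this is more than enough. Second, the parenthetical geometric picture is wrong: by formula~\eqref{anglestolengths}, as $\theta_{ij}\to 0$ the length $\ell_{ij}$ stays bounded (generically it tends to $0$; the regular tetrahedra with all angles tending to $0$ converge to the ideal octahedron, with all edge lengths tending to $0$). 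Edge lengths blow up when one of the quantities $d_i$ in~\eqref{dtheta} tends to $0$, i.e.\ when a vertex sum $\theta_{ij}+\theta_{ik}+\theta_{il}$ tends to $\pi$ and the corresponding truncation triangle degenerates to an ideal vertex (compare Remark~\ref{flat:rem}, where lengths diverge as $\theta\to\pi/3$). This slip does not affect your main argument, but it does undermine the sketched alternative via integration of the Schl{\"a}fli formula~\eqref{Schlafli-Formula}: the logarithmic divergence that must be shown integrable is attached to the faces $\theta_{ij}+\theta_{ik}+\theta_{il}=\pi$ of $\partial\calO$, not to the faces $\theta_{ij}=0$, where the integrand in fact remains bounded.
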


Rivin's Theorem allows us to control the volume on sequences of tetrahedra approaching the boundary of $\calL$:

\begin{prop}\label{boundarypoints}
The function $$V_L\colon \calL\to\R$$
continuously extends to a function $\overline{\calL}\to\R$, which we will still denote by $V_L$. Moreover, if $p\in \partial \calL\cap (0,+\infty)^6$, then 
$V_L(p)=0$.
\end{prop}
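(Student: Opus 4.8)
The plan is to prove the two assertions of Proposition~\ref{boundarypoints} separately, relying on Lemma~\ref{continuous:ext} and Rivin's Theorem~\ref{Rivin}. For the continuous extension, I would start from the composition
\begin{equation*}
V_L = V_\Theta \circ (\Theta \circ L^{-1})\colon \calL \to \R\ .
\end{equation*}
By Rivin's Theorem the map $V_\Theta$ extends continuously from $\calO$ to $\overline{\calO}$, and by Lemma~\ref{continuous:ext} the map $\Theta\circ L^{-1}$ extends continuously to a map $\overline{\calL}\to\overline{\calO}$. Composing these two continuous extensions yields a continuous function on $\overline{\calL}$ that agrees with $V_L$ on the interior $\calL$, which is exactly the desired extension. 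The only point requiring care is that the extension of $\Theta\circ L^{-1}$ lands in $\overline{\calO}$, so that one may legitimately compose with the extension of $V_\Theta$ provided by Rivin; but this is precisely the codomain asserted in Lemma~\ref{continuous:ext}, so the composition is well defined.

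For the second assertion, fix a point $p=(\ell_{12},\ell_{13},\ell_{14},\ell_{34},\ell_{24},\ell_{23})\in\partial\calL\cap(0,+\infty)^6$, so that all six coordinates are strictly positive and finite. The idea is that such a boundary point corresponds to a \emph{degenerate} (flat) truncated tetrahedron of zero volume, and the extended volume must record this. Concretely, I would take a sequence $\Delta_n\in\calT$ with $L(\Delta_n)\to p$ and analyze the limiting dihedral angles $\Theta(L^{-1}(\cdot))$ at $p$. Because all edge lengths remain bounded and bounded away from $0$, the tetrahedra cannot escape to infinity; the only way to reach $\partial\calL$ is through collapse onto a lower-dimensional hyperbolic object, as illustrated by the right-angled octagon of Remark~\ref{flat:rem}. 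In that picture the limiting angles lie on $\partial\calO$ and, crucially, by the continuous extension just established $V_L(p)=V_\Theta(q)$ where $q=(\Theta\circ L^{-1})(p)\in\overline{\calO}$.

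It therefore remains to argue that for every such limiting angle-tuple $q$ one has $V_\Theta(q)=0$. The key structural fact is that a point of $\partial\calL$ with all lengths finite and positive forces the tetrahedron to flatten: the six finite edge lengths constrain the combinatorics so that the limiting configuration is (a deformation of) a planar polygon in $\matH^2\subseteq\matH^3$, hence has empty interior in $\matH^3$ and thus zero volume. Making this rigorous is the step I expect to be the main obstacle, and I see two complementary ways to handle it. The geometric route is to show directly that any convergent sequence of compact truncated tetrahedra whose edge lengths converge to finite positive limits must have volumes tending to $0$, because the tetrahedra are being squeezed toward a geodesic plane (one can bound the volume by, say, a constant times the distance of a designated vertex from the plane spanned by the others, and show this distance tends to $0$). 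The analytic route, which dovetails better with the framework already set up, is to describe explicitly the image $(\Theta\circ L^{-1})(\partial\calL\cap(0,+\infty)^6)$ inside $\overline{\calO}$ — following the detailed description of $\partial\calL$ in \cite[Proposition~4.5]{luonew} — and to check that every such limiting angle-tuple $q$ is one at which Rivin's continuous extension of $V_\Theta$ vanishes. Either way, once $V_\Theta(q)=0$ is established the conclusion $V_L(p)=0$ is immediate from the continuity of the extension proved in the first part.
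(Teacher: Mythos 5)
Your strategy coincides with the paper's: the first assertion is obtained exactly as you describe, by composing the continuous extension of $\Theta\circ L^{-1}$ from Lemma~\ref{continuous:ext} with Rivin's extension of $V_\Theta$ from Theorem~\ref{Rivin}, and for the second assertion your ``analytic route'' is precisely the argument the paper carries out. However, the step you yourself flag as the main obstacle is where all the content lies, and your proposal leaves it unexecuted; it splits into two concrete tasks. First, one must identify $(\Theta\circ L^{-1})(p)$ for $p\in\partial\calL\cap(0,+\infty)^6$: by \cite[Proposition~4.5]{luonew} two \emph{opposite} dihedral angles $\theta_{ij},\theta_{hk}$ (with $\{i,j,h,k\}=\{1,2,3,4\}$) equal $\pi$ at such a point, and the inequalities defining $\overline{\calO}$ (the three angles at each vertex sum to at most $\pi$) then force the remaining four angles to vanish, so up to relabelling $(\Theta\circ L^{-1})(p)=(0,0,\pi,0,0,\pi)$. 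Second, one must actually evaluate Rivin's extension at this boundary tuple; since that extension is defined abstractly, this cannot simply be asserted but requires exhibiting \emph{some} sequence in $\calO$ converging to $(0,0,\pi,0,0,\pi)$ along which the volumes converge to $0$. The paper supplies this via the right-angled octagon of Remark~\ref{flat:rem}: truncated tetrahedra flattening onto a geodesic plane realize exactly this angle tuple in the limit with volumes tending to $0$, whence $V_L(p)=V_\Theta(0,0,\pi,0,0,\pi)=0$ by continuity. Note that your ``geometric route'' as stated would require proving that \emph{every} sequence with $L(\Delta_n)\to p$ flattens, which is essentially the content you would be importing from Luo anyway; working at the level of angles avoids this, because once the extension is known to be continuous a single well-chosen degenerating sequence determines its boundary value.
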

\begin{proof}
The first statement is an immediate consequence of Lemma~\ref{continuous:ext} and Theorem~\ref{Rivin}. By~\cite[Proposition 4.5]{luonew},
if $p\in\partial \calL\cap (0,+\infty)^6$ and $(\theta_{12},\ldots,\theta_{23})=(\Theta\circ L^{-1})(p)\in\overline{O}$,
then $\theta_{ij}=\theta_{hk}=\pi$, where $\{i,j,h,k\}=\{1,2,3,4\}$ (with a slight abuse, we denote by  $\Theta\circ L^{-1}$
also the continuous map extending $\Theta\circ L^{-1}$ to $\overline{\calL}$). Up to reordering the vertices, we may thus assume that
$(\Theta\circ L^{-1})(p)=(0,0,\pi,0,0,\pi)$. This $6$-tuple of angles may be realized as the limit point of a sequence of tetrahedra whose volume is tending to 0
(see
Remark~\ref{flat:rem}).  Since $V_L$ is continuous, this implies that $V_L(p)=0$.
\end{proof}

Our strategy is now very direct: if $\ell\leq \ell_0$ and $\Delta\in\calT_\ell$ is different from $\Delta_\ell$, we show that when decreasing the length of the longest edge of $\Delta$
the volume increases. Since $\Delta\neq \Delta_\ell$, the deformation just described may be performed inside $\calT_\ell$. With some care it is possible to prove that after a finite number of these volume-increasing deformations we  end up at the regular tetrahedron $\Delta_\ell$, and this concludes the proof. 
Let us state precisely our key result:

\begin{prop}\label{key}
Let $\ell\leq \ell_0$ and take an element $\Delta\in\calT_\ell$ such that $\vol(\Delta)\geq \vol(\Delta_\ell)$. Assume that 
$\ell_{12}(\Delta)\geq \ell_{ij}(\Delta)$ for every $1\leq i<j\leq 4$. Then
$$
\left(\frac{\partial \vol}{\partial \ell_{12}}\right)(\Delta)<0\ .
$$
\end{prop}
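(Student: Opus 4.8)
The plan is to translate the statement into a sign condition via the Schl\"afli formula and then reduce to a computation for acute-angled tetrahedra. Working in the length chart $L$, the Schl\"afli formula \eqref{Schlafli-Formula} together with the chain rule gives
$$
\frac{\partial \vol}{\partial \ell_{12}}(\Delta) \;=\; -\frac{1}{2}\sum_{1\le i<j\le 4}\ell_{ij}(\Delta)\,\frac{\partial \theta_{ij}}{\partial \ell_{12}}(\Delta),
$$
where the partial derivatives $\partial\theta_{ij}/\partial\ell_{12}$ are computed keeping all the other edge lengths fixed. Thus it suffices to prove that $S(\Delta)=\sum_{i<j}\ell_{ij}\,\partial\theta_{ij}/\partial\ell_{12}$ is strictly positive. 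Since $-2\,\vol$ is a potential for the $1$-form $\sum_{ij}\ell_{ij}\,d\theta_{ij}$, this form is closed, so the Jacobian matrix $(\partial\theta_{ij}/\partial\ell_{hk})$ of $\Theta\circ L^{-1}$ is symmetric; I keep this symmetry in reserve, since it rewrites $S(\Delta)=\sum_{ij}\ell_{ij}\,\partial\theta_{12}/\partial\ell_{ij}$ as the derivative of $\theta_{12}$ along the ``radial'' direction $L(\Delta)$, clarifying why increasing $\ell_{12}$ should behave as in the regular case of Lemma~\ref{monotone}.

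First I would exploit the hypothesis $\vol(\Delta)\ge\vol(\Delta_\ell)$ to bound the angle sum of $\Delta$. Writing $\theta^\ast$ for the sum of the six dihedral angles of $\Delta$ and $R$ for the regular truncated tetrahedron with all angles equal to $\theta^\ast/6$, Proposition~\ref{prop:volume:max:reg:somma:ang:uguale} gives $\vol(R)\ge\vol(\Delta)\ge\vol(\Delta_\ell)$. As $R$ is regular, $R=\Delta_{\ell^\ast}$ for some $\ell^\ast$, and Lemma~\ref{monotone} forces $\ell^\ast\le\ell$, whence $\theta^\ast$ is at most the angle sum of $\Delta_\ell$. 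Since $\ell\le\ell_0$, each angle of $\Delta_\ell$ is at most $\theta_{ij}(\Delta_{\ell_0})=\pi/6$, so the angle sum of $\Delta_\ell$ is at most $\pi$ and therefore $\theta^\ast\le\pi$. This is exactly the regime in which $\Delta$ may be assumed acute-angled (Proposition~\ref{Angoli-Acuti-et-al}), which I would use throughout the rest of the argument.

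The computational core is then to establish $S(\Delta)>0$ under the standing assumptions that all $\theta_{ij}<\pi/2$ and that $e_{12}$ is a longest edge. I would compute each $\partial\theta_{ij}/\partial\ell_{12}$ by differentiating the explicit relation \eqref{lengthstoangles}, $\cos\theta_{ij}=w_{ij}/\sqrt{z_kz_l}$, with respect to $\ell_{12}$ while holding the remaining five lengths fixed; the factor $-\sin\theta_{ij}$ produced on the left is negative and nonzero in the acute regime, so the sign of $\partial\theta_{ij}/\partial\ell_{12}$ is read off from the sign of the derivative of the right-hand side. I expect the diagonal term to be favourable, $\partial\theta_{12}/\partial\ell_{12}>0$, the opposite-edge term $\partial\theta_{34}/\partial\ell_{12}$ to be negative, and the four adjacent-edge terms to be of controlled size; the role of the hypothesis $\ell_{12}\ge\ell_{ij}$ is that the large positive weight $\ell_{12}$ multiplies the favourable diagonal derivative, allowing it to dominate the negatively weighted contributions once magnitudes are compared in the three pairs of opposite edges $\{12,34\},\{13,24\},\{14,23\}$.

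The main obstacle is precisely this sign-and-magnitude bookkeeping for the off-diagonal derivatives: a priori the adjacent-edge partials $\partial\theta_{13}/\partial\ell_{12},\dots,\partial\theta_{24}/\partial\ell_{12}$ are not individually signed, and the expressions coming out of \eqref{lengthstoangles} (and its companion \eqref{anglestolengths}) are unwieldy. To tame them I would use the acute-angled hypothesis, which controls the signs of the $\cosh\ell$- and $\cos\theta$-monomials appearing in the $w_{ij}$ and $z_i$, together with the symmetry of the Jacobian noted above to cut down the number of independent quantities. Estimating each opposite-edge pair with the inequality $\ell_{12}\ge\ell_{ij}$ should then yield strict positivity of $S(\Delta)$, the strictness coming from the fact that $\Delta\ne\Delta_\ell$, i.e.\ that not all edge lengths are equal, so that at least one of the comparisons is strict.
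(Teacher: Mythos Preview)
Your setup is correct and matches the paper's: Schl\"afli plus the chain rule gives the displayed expression for $\partial\vol/\partial\ell_{12}$, and the reduction to the acute-angled regime via Proposition~\ref{Angoli-Acuti-et-al} is exactly what the paper does. But from that point on your proposal is an outline of intentions rather than a proof, and the part you label ``the main obstacle'' is in fact the entire content of the argument. The paper does not differentiate \eqref{lengthstoangles} directly; it imports from \cite{Guo} an explicit closed form for the Jacobian $(\partial\theta_{ij}/\partial\ell_{hk})$ in terms of the dihedral angles only, which turns $\partial\vol/\partial\ell_{12}$ into a negative constant times an explicit trigonometric polynomial in the $\theta_{ij}$ and the $\ell_{ij}$. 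The hypothesis $\ell_{12}\ge\ell_{ij}$ is then used once, to replace every $\ell_{ij}$ by $\ell_{12}$ (after discarding the strictly positive summand $\ell_{34}\sin\theta_{12}\sin\theta_{34}$), which eliminates the lengths entirely and reduces everything to the pure angle inequality of Proposition~\ref{tecnicofinale}. That inequality is not a soft consequence of acuteness; its proof occupies three separate lemmas with a case split on the size of $\theta_{12}$ and repeated use of the finer bound $\theta_{ij}+\theta_{ik}<7\pi/12$ from Proposition~\ref{Angoli-Acuti-et-al}(3). None of this is present in your proposal, and your stated expectations about signs (``the opposite-edge term $\partial\theta_{34}/\partial\ell_{12}$ to be negative'') are not verified.

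There is also a concrete error in your last sentence. The proposition does \emph{not} assume $\Delta\neq\Delta_\ell$; the strict inequality $\partial\vol/\partial\ell_{12}<0$ must hold in particular at the regular tetrahedron $\Delta_\ell$ itself, where all edge lengths are equal and none of your ``comparisons is strict''. In the paper, strictness enters not through the $\ell_{12}\ge\ell_{ij}$ hypothesis but through the discarded term $\ell_{34}\sin\theta_{12}\sin\theta_{34}>0$.
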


We now show how our main theorem may be deduced from Proposition~\ref{key}, whose proof is deferred to Section~\ref{sec:prop:key}. 
We fix a constant $0<\ell\leq \ell_0$, and for every $\Delta\in\calT_\ell$ we denote by $m(\Delta)\in\{1,\ldots,6\}$ the number of edges of $\Delta$ of maximal length.
We will show that $\vol(\Delta)\leq \vol(\Delta_\ell)$ (with equality only when $\Delta=\Delta_\ell$) by reverse induction on $m(\Delta)$. If $m(\Delta)=6$, then $\Delta$ is regular, and the conclusion follows from 
Lemma~\ref{monotone}. 

Suppose now $m(\Delta)<6$, and let us show that $\vol(\Delta)<\vol(\Delta_\ell)$.
We denote the biggest and the second biggest lengths of $\Delta$ by $\ell_a$ and $\ell_b$, respectively:
$$\ell_a=\max_{1\leq i<j\leq 4} \{\ell_{ij}(\Delta)\}\, ,\qquad \ell_b=\max \left(\{\ell_{ij}(\Delta)\, ,\ 1\leq i<j\leq 4\}\setminus \{\ell_a\}\right)\ .
$$
We consider the deformation of $\Delta$ corresponding to the following curve:
$$
\gamma\colon [0,\ell_a-\ell_b]\to (0,+\infty)^6\, ,\qquad \gamma(t)=(\ell_{12}(t),\ldots,\ell_{23}(t))\ ,
$$
where $\ell_{ij}(t)=\ell_a-t$ if $\ell_{ij}(\Delta)=\ell_a$ and $\ell_{ij}(t)=\ell_{ij}(\Delta)$ otherwise. Of course we may suppose that $\vol(\Delta)> \vol(\Delta_\ell)$
(otherwise we are done). Under this assumption we have the following:

\begin{lemma}\label{lemmino}
The curve $\gamma$ takes values in $L(\calT_\ell)$, and 
$V_L(\gamma(t_a-t_b))>\vol(\Delta)$.
\end{lemma}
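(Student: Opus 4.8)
The plan is to prove both assertions of Lemma~\ref{lemmino} at once by a continuity (open--closed) argument along $\gamma$, using Proposition~\ref{key} to force the volume to increase and Proposition~\ref{boundarypoints} to prevent the curve from ever reaching $\partial\calL$.

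First I would record the elementary features of $\gamma$. Throughout the deformation the edges being shortened have length $\ell_a-t$, which stays in $[\ell_b,\ell_a]$ and is strictly larger than $\ell_b$ for $t<\ell_a-\ell_b$, while the remaining edges keep their (fixed) lengths, all lying in $[\ell,\ell_b]$. In particular every coordinate of $\gamma(t)$ lies in the compact interval $[\ell,\ell_a]\subset(0,+\infty)$, so the constraint that each edge be $\geq\ell$ can never be violated; hence the only way for $\gamma(t)$ to leave $L(\calT_\ell)$ is to leave the open set $\calL$. Note also that the shortened edges remain the (possibly non-strict) maximal edges for every $t$, so by the symmetry of truncated tetrahedra under vertex relabeling, Proposition~\ref{key} applies to each of them at any $\gamma(t)$ with $\gamma(t)\in L(\calT_\ell)$ and $V_L(\gamma(t))\geq \vol(\Delta_\ell)$, giving $(\partial\vol/\partial\ell_{ij})(\gamma(t))<0$ for each maximal edge $e_{ij}$.

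The heart of the argument is a bootstrap. Set $t^\ast=\sup\{t:\gamma([0,t])\subseteq\calL\}$; since $\calL$ is open and $\gamma(0)=L(\Delta)\in\calL$ we have $t^\ast>0$, and on $[0,t^\ast)$ the function $f(t)=V_L(\gamma(t))$ is $C^1$. The chain rule gives $f'(t)=-\sum_{ij\text{ maximal}}(\partial\vol/\partial\ell_{ij})(\gamma(t))$, so whenever $f(t)\geq\vol(\Delta_\ell)$ Proposition~\ref{key} makes every summand negative, whence $f'(t)>0$. Since $f(0)=\vol(\Delta)>\vol(\Delta_\ell)$, a last-crossing argument now shows that $f$ never drops back to the level $\vol(\Delta)$: if it did, at the last time $t_0$ before the drop with $f(t_0)=\vol(\Delta)>\vol(\Delta_\ell)$ we would have $f'(t_0)>0$, contradicting that $f$ is about to decrease. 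Hence $f$ is strictly increasing and $f(t)\geq\vol(\Delta)>\vol(\Delta_\ell)>0$ on all of $[0,t^\ast)$.

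Finally I would rule out the curve leaving $\calL$. By continuity $\gamma(t^\ast)\in\overline{\calL}$; if it were in $\partial\calL$, then since $\gamma(t^\ast)\in\partial\calL\cap(0,+\infty)^6$ Proposition~\ref{boundarypoints} would give $V_L(\gamma(t^\ast))=0$, whereas the continuous extension of $V_L$ to $\overline\calL$ together with $f\geq\vol(\Delta)>0$ forces $V_L(\gamma(t^\ast))\geq\vol(\Delta)>0$, a contradiction. Thus $\gamma(t^\ast)\in\calL$, hence $\gamma(t^\ast)\in L(\calT_\ell)$; and if $t^\ast<\ell_a-\ell_b$ the openness of $\calL$ would allow the curve to continue, contradicting the choice of $t^\ast$. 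Therefore $t^\ast=\ell_a-\ell_b$ and $\gamma([0,\ell_a-\ell_b])\subseteq L(\calT_\ell)$, which is the first assertion; the second, $V_L(\gamma(\ell_a-\ell_b))>\vol(\Delta)$, then follows from the strict monotonicity of $f$ and continuity at the endpoint. I expect the realizability (first) assertion to be the main obstacle: one cannot move freely inside the non-convex set $\calL$, and it is exactly the interplay between the volume-increasing estimate of Proposition~\ref{key} and the vanishing of volume at finite boundary points (Proposition~\ref{boundarypoints}) that confines the deformation to $\calL$.
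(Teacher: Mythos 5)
Your proof is correct and takes essentially the same route as the paper's: Proposition~\ref{key} makes $V_L\circ\gamma$ strictly increasing while the curve stays in $\calL$, and Proposition~\ref{boundarypoints} (volume vanishes on $\partial\calL\cap(0,+\infty)^6$) combined with the volume staying above $\vol(\Delta)>0$ rules out the curve ever reaching $\partial\calL$. Your explicit bootstrap keeping $f\geq \vol(\Delta_\ell)$ along the deformation, and your writing of $f'$ as a sum over all maximal edges (rather than $m(\Delta)$ times a single partial derivative), are slightly more careful than the paper's phrasing, but the argument is the same.
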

\begin{proof}
By definition we have $L(\calT_\ell)=\calL\cap [\ell,+\infty)^6$. Since $\ell \leq \ell_b\leq \ell_a$, the fact that
$\gamma(t)\in [\ell,+\infty)^6$ is obvious, so
 we need to check that $\gamma([0,\ell_a-\ell_b])\subseteq \calL$, and  
$V_L(\gamma(t_a-t_b))>\vol(\Delta)$.

Let $t_0\in [0,\ell_a-\ell_b]$. We first show that, if $\gamma([0,t_0))\subseteq \calL$, then
$V_L(\gamma(t_0))>\vol(\Delta)$ (recall that $V_L$ is defined and continuous on the whole
of $\overline{\calL}$). In fact, up to reordering the vertices of $\Delta$,
we may suppose that $e_{12}$ is one of the edges of $\Delta$ of maximal length. For every $t\in[0,t_0)$,
if $\Delta_t=L^{-1}(\gamma(t))$, then
the set of the longest edges of $\Delta_t$ coincides with the set of the longest edges of $\Delta$.
In particular, Proposition~\ref{key} implies that  
for every $t\in [0,t_0)$ 
$$
(V_L\circ\gamma)'(t)=-m(\Delta)\cdot \left(\frac{\partial \vol}{\partial \ell_{12}}\right)(\Delta_t)>0\ .
$$
We thus have $V_L(\gamma(t_0))> V_L(\gamma(0))=\vol(\Delta)$, as claimed.

Suppose by contradiction that the lemma is false. Since $\calL$ is open and $V_L$ is continuous, what we have proved so far
implies that there exists $t_0\in [0,\ell_a-\ell_b]$ such that $\gamma([0,t_0))\subseteq \calL$ and
$\gamma(t_0)\in\partial\calL$. Moreover, $V_L(\gamma(t_0))>\vol(\Delta)>0$. 
Since $\gamma(t_0)$ also belongs to $\overline{\calT_\ell}\subseteq (0,+\infty)^6$, this
contradicts Proposition~\ref{boundarypoints}, thus concluding the proof of the lemma.
\end{proof}

Let us now set $\Delta'=L^{-1}(\gamma(\ell_a-\ell_b))$. By construction, $\Delta'$ belongs to $\calT_\ell$ and satisfies
$m(\Delta')>m(\Delta)$. Our inductive hypothesis now implies that $\vol(\Delta')\geq \vol(\Delta_\ell)$, while 
Lemma~\ref{lemmino} ensures that $\vol(\Delta)< \vol(\Delta')$. This concludes the proof of our main theorem, under the assumption that Proposition~\ref{key} holds.

\begin{rem}\label{rem-conto-numerico}
As already stated in the introduction, it is worth noting that the volume cannot be neither concave nor convex with respect to edge lengths. Indeed, one can numerically evaluate the Hessian matrix of the volume with respect to edge lengths in some point (e.g.~the regular tetrahedron with edge lengths all equal to $3$) and check that it has $2$ positive eigenvalues and $4$ negative eigenvalues. 

Notice that there is also a more explicit way to check that the volume cannot be concave with respect to edge lengths. Indeed, otherwise, being strictly decreasing on the line $t\mapsto L^{-1}(t,t,t,t,t,t)$, the volume of regular tetrahedra with big edge lengths would be negative.
\end{rem}

\section{Proof of Proposition~\ref{key}}\label{sec:prop:key}
Before going into the hearth of the proof of Proposition~\ref{key}, we establish some useful constraints on the dihedral angles of potential maxima
of the volume on $\calT_\ell$.
The assumption $\ell\leq\ell_0$ allows us to consider only tetrahedra with volume bigger than $\vol(\Delta_{\ell_0})$. Thanks to the following
proposition, this provides some restrictions on dihedral angles, that will prove useful in the computations we will carry out later. First recall (e.g.~from~\cite{M}) that
$$
\vol(\Delta_{\ell_0})=\left( 8\Lambda \left(\frac{\pi}4\right) - 3\int\limits_0^{\frac{\pi}{6}}
\cosh^{-1} \left(\frac{\cos t}{2\cos t -1}\right) \,{\rm d}t\right)\ \approx 3.226\ ,
$$
where $\Lambda\colon\R\to\R$ is the Lobachevsky function  defined by
$$ \Lambda(\theta)=-\int\limits_0^\theta \log |2 \sin u |\,\textrm{d}u.$$

 \begin{prop}\label{Angoli-Acuti-et-al}
 Let $\theta_{ij}$, $1\leq i<j\leq 4$ be the dihedral angles of a truncated tetrahedron $\Delta$ such that
 $$
 \vol(\Delta)\geq \vol(\Delta_{\ell_0})\ .
 $$
 Then:
 \begin{enumerate}
 \item $\theta_{12}+\theta_{13}+\theta_{14}+\theta_{23}+\theta_{24}+\theta_{34}\leq \pi$;
 \item $\theta_{ij}< \pi/2$ for every $i\neq j$;

\item if $i,j,k$ are distinct, then $\theta_{ij}+\theta_{ik}< (7/12)\pi$.
  \end{enumerate}
  \end{prop}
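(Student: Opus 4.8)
The plan is to convert the hypothesis $\vol(\Delta)\geq \vol(\Delta_{\ell_0})$ into the three angle bounds by exploiting two facts established above: the volume is strictly decreasing in each dihedral angle (Lemma~\ref{decrease:increase}), and it extends continuously to the closed polytope $\overline{\calO}$ (Theorem~\ref{Rivin}). The common mechanism is that if some angle, or some sum of two angles at a vertex, were too large, then \emph{monotonicity} would let me decrease all the remaining angles down to an extremal point of $\overline{\calO}$ without ever decreasing the volume; at such an extremal point the volume can be evaluated and shown to be at most $\vol(\Delta_{\ell_0})$, contradicting the hypothesis.

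For statement (1) I would avoid the boundary analysis altogether and argue directly. Set $\theta=\sum_{i<j}\theta_{ij}(\Delta)$; summing the four vertex inequalities defining $\calO$ (each edge occurring in exactly two of them) gives $\theta<2\pi$, so the regular truncated tetrahedron $R_\theta$ with all angles equal to $\theta/6$ exists. By Proposition~\ref{prop:volume:max:reg:somma:ang:uguale} one has $\vol(\Delta)\leq \vol(R_\theta)$. On the other hand, by the Schl\"afli formula~\eqref{Schlafli-Formula} (equivalently, by Lemma~\ref{decrease:increase}) the volume of the regular tetrahedron is a strictly decreasing function of its common dihedral angle, and at the value $\pi/6$ it equals $\vol(\Delta_{\ell_0})$. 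Hence $\theta>\pi$ would force $\vol(\Delta)\leq\vol(R_\theta)<\vol(\Delta_{\ell_0})$, a contradiction; this proves $\theta\leq\pi$.

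For statements (2) and (3) I would run the monotonicity reduction. Suppose, towards a contradiction, that $\theta_{12}\geq \pi/2$. The point $c=(\pi/2,0,0,0,0,0)$ lies in $\overline{\calO}$ and is coordinatewise dominated by $\Theta(\Delta)$, so continuity together with Lemma~\ref{decrease:increase} gives $\vol(\Delta)\leq V_\Theta(c)$. Similarly, assuming $\theta_{12}+\theta_{13}\geq 7\pi/12$, monotonicity forces the maximizing configuration over $\overline{\calO}$ subject to this constraint to have the four remaining angles equal to $0$ and $\theta_{12}+\theta_{13}=7\pi/12$; on the resulting segment $V_\Theta$ is concave and, by the symmetry of the tetrahedron exchanging the vertices $2$ and $3$, symmetric under $\theta_{12}\leftrightarrow\theta_{13}$, so its maximum is attained at the midpoint $d=(7\pi/24,7\pi/24,0,0,0,0)$. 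Thus both (2) and (3) reduce to the single numerical inequalities $V_\Theta(c)\leq \vol(\Delta_{\ell_0})$ and $V_\Theta(d)\leq \vol(\Delta_{\ell_0})$.

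The main obstacle is precisely these two evaluations. The points $c$ and $d$ are highly degenerate: five (resp.\ four) vanishing dihedral angles mean the corresponding internal edges have diverged, so the value of $V_\Theta$ there is only accessible through the continuous extension. Concretely, I would integrate the Schl\"afli formula along the straight path in $\overline{\calO}$ from the all-zero corner, at which $V_\Theta=8\Lambda(\pi/4)$, to $c$ (resp.\ to $d$), using an explicit expression for the relevant edge lengths in terms of the angles via~\eqref{anglestolengths}, or alternatively evaluate the closed dilogarithmic volume formula directly. Since the edge lengths blow up logarithmically near the all-zero corner, the volume drops steeply as soon as an angle is switched on, which is the qualitative reason the inequalities hold and why $\pi/2$ and $7\pi/12$ are essentially the sharp thresholds; making these estimates rigorous, and upgrading them to the strict inequalities for genuine (non-degenerate) tetrahedra via the strict form of Lemma~\ref{decrease:increase}, is the technical heart of the argument.
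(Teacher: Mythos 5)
Your reduction follows the paper's proof almost step for step. Part (1) is exactly the paper's argument: apply Proposition~\ref{prop:volume:max:reg:somma:ang:uguale} to pass to the regular tetrahedron with angle $\theta/6$, then use Lemma~\ref{decrease:increase} and the fact that $\Delta_{\ell_0}$ has angles $\pi/6$. For (2) and (3) the paper performs the same monotonicity reduction you describe, pushing the configuration down to the boundary points $(\pi/2,0,0,0,0,0)$ and, after the same symmetry-plus-concavity argument, $(7\pi/24,7\pi/24,0,0,0,0)$, with the limit taken along interior points $(\cdot,\varepsilon,\dots,\varepsilon)$. Up to this point the proposal is sound.

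The gap is that you never carry out the two evaluations $V_\Theta(c)$ and $V_\Theta(d)$, and these \emph{are} the proposition: everything before them is soft. The paper finishes by plugging the two degenerate $6$-tuples into Ushijima's closed dilogarithm formula (which is continuous on $\overline{\calO}$), obtaining $\approx 3.011$ and $\approx 3.210$ against $\vol(\Delta_{\ell_0})\approx 3.226$. Note that the second comparison has a margin of less than $0.02$, so no qualitative argument can substitute for the computation; the constants $\pi/2$ and $7\pi/12$ are chosen precisely because they sit just below the failure threshold. Moreover, the heuristic you offer in support is backwards: at the all-zero corner the internal edge lengths tend to $0$, not to $\infty$ (the limit object is the regular ideal octahedron, which is where the value $8\Lambda(\pi/4)$ comes from; lengths diverge at the opposite extreme, e.g.\ as the common angle of the regular tetrahedron approaches $\pi/3$). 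By the Schl\"afli formula $d\vol=-\tfrac12\sum\ell_{ij}\,d\theta_{ij}$, the gradient of $V_\Theta$ therefore \emph{vanishes} at the all-zero corner, so the volume decreases slowly, not steeply, as an angle is switched on. This is exactly why the estimates are delicate and why the explicit dilogarithm evaluation (or a carefully bounded Schl\"afli integral along your proposed path, where only $\ell_{12}(t)=\cosh^{-1}\bigl((3-\cos t)/(1+\cos t)\bigr)$ is nonzero) cannot be waved away.
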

\begin{proof}
(1) By Proposition \ref{prop:volume:max:reg:somma:ang:uguale}, if $\theta=\theta_{12}+\theta_{13}+\theta_{14}+\theta_{23}+\theta_{24}+\theta_{34}$ and $\Delta'$ denotes
the regular tetrahedron with dihedral angles all equal to $\theta'=\theta/6$, then $\vol(\Delta')\geq \vol(\Delta)\geq \vol(\Delta_{\ell_0})$. Since the dihedral angles
of $\Delta_{\ell_0}$ are all equal to $\pi/6$,
by Lemma~\ref{decrease:increase} this implies  $\theta'\leq \pi/6$, whence the conclusion.

Items (2) and (3) can be proved by contradiction using Ushijima's formula for the volume of truncated tetrahedra~\cite{Ush}.
Let us first prove (2).
Assume e.g.~$\theta_{12} \geq \pi/2$. By Lemma \ref{decrease:increase} we have that 
$$\vol(\Delta) \leq \liminf_{\varepsilon \rightarrow 0^+} V_\Theta\left(\frac{\pi}{2}, \varepsilon, \varepsilon, \varepsilon, \varepsilon, \varepsilon\right)\ .$$ 
In order to compute the right-hand side of this inequality we then use~\cite[Thm. 1.1]{Ush}, which establishes an explicit formula for the volumes
of truncated tetrahedra in terms of their dihedral angles. Since this formula is continuous also on the closure of $\mathcal{O}$, we can simply evaluate it
at the point $(\pi/2,0,0,0,0,0)$. Before performing this computation, let us briefly recall how Ushijima's formula works.

One first consider the Gram matrix:
$$
G = \begin{pmatrix} 1 & -\cos\theta_{12} & -\cos\theta_{13} & -\cos\theta_{23} \\
-\cos\theta_{12} & 1 & -\cos\theta_{14} & -\cos\theta_{24} \\
-\cos\theta_{13} & -\cos\theta_{14} & 1 & -\cos\theta_{34} \\
-\cos\theta_{23} & -\cos\theta_{24} & -\cos\theta_{34} & 1
\end{pmatrix}
$$
and defines the following  complex numbers:
$$
z_1 = -2 \frac{\sin\theta_{12} \sin\theta_{34} + \sin\theta_{13} \sin\theta_{24} + \sin\theta_{14} \sin\theta_{23} - \sqrt{\det G }}{ad + be + cf + abf + ace + bcd + def + abcdef}\ ,
$$
$$
z_2 = -2 \frac{\sin\theta_{12} \sin\theta_{34} + \sin\theta_{13} \sin\theta_{24} + \sin\theta_{14} \sin\theta_{23} + \sqrt{\det G}}{ad + be + cf + abf + ace + bcd + def + abcdef}\ ,
$$
where
$$
a = \exp(i \theta_{12})\, ,\ b = \exp(i \theta_{13})\, ,\ c=\exp(i\theta_{14})\, ,$$
$$ d=\exp(i\theta_{34})\, ,\ e=\exp(i\theta_{24})\, ,\  f = \exp(i \theta_{23})\ .$$
Finally, the volume of the truncated tetrahedron having the $\theta_{ij}$ as dihedral angles is given by
$$\frac{1}{2} \Im(U(z_1) - U(z_2))\ ,$$
where $\Im$ denotes the imaginary part,
\begin{align*}
U(z_k) &= \frac{1}{2} \{Li_{2}(z_k) + Li_{2}(abde z_k) + Li_{2}(acdf z_k) + Li_{2}(bcef z_k) \\
&- Li_{2}(-abc z_k) - Li_{2}(-aef z_k) - Li_{2}(-bdf z_k) - Li_{2}(-cde z_k)\} \nonumber\ ,
\end{align*} 
and
$$
Li_2(z)=\sum_{k=1}^\infty \frac{z^k}{k^2}
$$
is Spence's dilogarithm.


In order to compute the volume of the tetrahedron with dihedral angles $(\pi/2,0,\ldots,0)$ we only need to observe that in this case
$\det G=-8$, while $a=i$ and $b=c=d=e=f=1$. Thus $$z_1=\frac{\sqrt{2}}{2}(1+ i) =-z_2$$ and 
\begin{align*}
\liminf_{\varepsilon \rightarrow 0^+} V_\Theta\left(\frac{\pi}{2}, \varepsilon, \varepsilon, \varepsilon, \varepsilon, \varepsilon\right) &=\frac{1}{2}\Im(U(z_1)-U(z_2)) \\
&= \frac{1}{2} \Im \Biggl( 2 Li_2\left(\frac{\sqrt{2}}{2}\left(1+ i\right) \right) - 2 Li_2\left(\frac{\sqrt{2}}{2}\left(1- i\right) \right) \\
&+ 2 Li_2\left(\frac{\sqrt{2}}{2}\left(-1+ i\right) \right) - 2 Li_2\left(\frac{\sqrt{2}}{2}\left(-1- i\right) \right) \Biggr) \\
&\approx 3.011 <  \vol(\Delta_{\ell_0})\ .
\end{align*}

Let us now prove (3).
Arguing as above, we may assume that $\theta_{12} + \theta_{13} \geq (7/12) \pi$, and we have
$$
\vol(\Delta) \leq \liminf_{\varepsilon \rightarrow 0^+} V_\Theta(\theta_{12}, \theta_{13}, \varepsilon, \varepsilon, \varepsilon, \varepsilon)\ .
$$
Let now $\theta'=(\theta_{12}+\theta_{13})/2\geq (7/24)\pi$. Thanks to the 
the symmetries of the tetrahedron and the concavity of the volume we have that the function $t\mapsto V_\Theta(\theta'+t,\theta'-t,\varepsilon,\ldots,\varepsilon)$ 
attains its maximum at $t=0$. Using Lemma~\ref{decrease:increase} we may thus conclude that 
$$
\vol(\Delta) \leq \liminf_{\varepsilon \rightarrow 0^+} V_\Theta(\theta',\theta', \varepsilon, \varepsilon, \varepsilon, \varepsilon)\leq 
\liminf_{\varepsilon \rightarrow 0^+} V_\Theta\left(\frac{7}{24}\pi, \frac{7}{24}\pi, \varepsilon, \varepsilon, \varepsilon, \varepsilon\right)\ .
$$
By applying Ushijima's formula to the $6$-tuple $((7/24)\pi,(7/24)\pi,0,0,0,0)$ we conclude that the right-hand side of this inequality is 
given by

$$\liminf_{\varepsilon \rightarrow 0^+} V_\Theta\left(\frac{7}{24}\pi, \frac{7}{24}\pi, \varepsilon, \varepsilon, \varepsilon, \varepsilon\right) \approx 3.210 < \vol(\Delta_{\ell_0})\ ,$$
and this concludes the proof.
\end{proof}

\subsection{The derivative of the volume with respect to lengths}
Of course, the main ingredient to compute the derivative of the volume with respect to lengths is Lobachevsky--Schl{\"a}fli formula~\eqref{Lobachevsky--Schlafli-Formula},
from which we get 
\begin{equation}\label{Lobachevsky--Schlafli-Formula2}
d \vol = - \frac{1}{2} \sum_{1\leq i<j\leq 4} \left(\sum_{1\leq k<l\leq 4} \ell_{kl} \frac{\partial \theta_{kl}}{\partial \ell_{ij}}\right) d\ell_{ij}\ . 
\end{equation}

We will first show how to exploit this formulation of the Lobachevsky--Schl{\"a}fli differential equality to reduce Proposition~\ref{key} to an explicit trigonometric inequality
over a specific subdomain of $\calO$ (see Proposition~\ref{tecnicofinale}). We will then provide 
a proof of this inequality, thus concluding the proof of Theorem~\ref{main:thm}.

Let us fix a length $\ell\leq \ell_0$, and take an element $\Delta\in\calT_{\ell}$. Also assume as in Proposition~\ref{key} that
$\vol(\Delta)\geq \vol(\Delta_\ell)$ and $\ell_{12}(\Delta)\geq \ell_{ij}(\Delta)$ for every $1\leq i<j\leq 4$.
We need to show that
\begin{equation*}
\left(\frac{\partial \vol}{\partial \ell_{12}}\right) (\Delta)<0\ .
\end{equation*}

Let us denote by $\theta_{ij}$, $1\leq i<j\leq 4$, the dihedral angles of $\Delta$. 
Putting together the computation of the partial derivatives $\partial \theta_{ij}/\partial \ell_{hk}$ provided by the main theorem of~\cite{Guo} and
the Lobachevsky--Schl{\"a}fli formula~\eqref{Lobachevsky--Schlafli-Formula2} we obtain
\begin{align*}
\frac{\partial \vol}{\partial \ell_{12}}(\Delta)&=-\frac{1}{2} \sum_{1\leq i<j\leq 4} \ell_{ij}\frac{\partial \theta_{ij}}{\partial \ell_{12}}\\ &=
k\big(\ell_{12}(\cos\theta_{12}(\cos\theta_{13}\cos\theta_{23}+\cos\theta_{14}\cos\theta_{24})\\ &+\cos\theta_{13}\cos\theta_{24}+\cos\theta_{14}\cos\theta_{23})
\\ &-\ell_{13}\sin\theta_{12}\sin\theta_{13}\cos\theta_{23}
-\ell_{14}\sin\theta_{12}\sin\theta_{14}\cos\theta_{24} +\ell_{34}\sin\theta_{12}\sin\theta_{34}
\\ &-\ell_{24}\sin\theta_{12}\sin\theta_{24}\cos\theta_{14}
-\ell_{23}\sin\theta_{12}\sin\theta_{23}\cos\theta_{13}\big)\ ,
\end{align*}
where $k$ is a negative constant. Therefore, in order to conclude we need to prove that
\begin{align*}
&
\ell_{12}(\cos\theta_{12}(\cos\theta_{13}\cos\theta_{23}+\cos\theta_{14}\cos\theta_{24})+\cos\theta_{13}\cos\theta_{24}+\cos\theta_{14}\cos\theta_{23})\\ + &\ell_{34}\sin\theta_{12}\sin\theta_{34}\\ > & 
\ell_{13}\sin\theta_{12}\sin\theta_{13}\cos\theta_{23}+\ell_{14}\sin\theta_{12}\sin\theta_{14}\cos\theta_{24} +\ell_{24}\sin\theta_{12}\sin\theta_{24}\cos\theta_{14}
\\ + & \ell_{23}\sin\theta_{12}\sin\theta_{23}\cos\theta_{13}\ .
\end{align*}
Since 
$\vol(\Delta)\geq \vol(\Delta_{\ell})\geq \vol(\Delta_{{\ell_0}})$, Proposition~\ref{Angoli-Acuti-et-al} implies
  that $0<\theta_{ij}<\pi/2$ for every $1\leq i<j\leq 4$. 
Therefore, $\sin\theta_{ij}>0$ and $\cos\theta_{ij}>0$ for every $1\leq i<j\leq 4$.
Since $\ell_{34}\sin\theta_{12}\sin\theta_{34}>0$ and $\ell_{12}\geq \ell_{ij}$ for every $i,j\in\{1,\ldots,4\}$, it is then sufficient to show that
\begin{align*}
&
\cos\theta_{12}(\cos\theta_{13}\cos\theta_{23}+\cos\theta_{14}\cos\theta_{24})+\cos\theta_{13}\cos\theta_{24}+\cos\theta_{14}\cos\theta_{23}\\ \geq &
\sin\theta_{12}\sin\theta_{13}\cos\theta_{23}+\sin\theta_{12}\sin\theta_{14}\cos\theta_{24} +\sin\theta_{12}\sin\theta_{24}\cos\theta_{14}
\\  & + \sin\theta_{12}\sin\theta_{23}\cos\theta_{13}\\
= & \sin\theta_{12}\left(\sin\left(\theta_{13}+\theta_{23}\right)+\sin\left(\theta_{14}+\theta_{24}\right)\right)\  .
\end{align*}

We have thus reduced  Proposition~\ref{key} to  the following:

\begin{prop}\label{tecnicofinale}
Let $\theta_{ij}$, $i,j\in\{1,\ldots,4\}$, be the dihedral angles of a truncated tetrahedron $\Delta$ such that
$\vol(\Delta)\geq \vol(\Delta_{\ell_0})$. Then
\begin{align*}
& \cos\theta_{12}(\cos\theta_{13}\cos\theta_{23}+\cos\theta_{14}\cos\theta_{24})+\cos\theta_{13}\cos\theta_{24}+\cos\theta_{14}\cos\theta_{23}\\ \geq &
\sin\theta_{12}(\sin(\theta_{13}+\theta_{23})+\sin(\theta_{14}+\theta_{24}) )
\ . 
\end{align*}
\end{prop}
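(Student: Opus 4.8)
The plan is to treat $\theta_{12}$ as the distinguished variable and to linearise the dependence of the claimed inequality on it. Writing
$$P=\cos\theta_{13}\cos\theta_{23}+\cos\theta_{14}\cos\theta_{24},\quad Q=\cos\theta_{13}\cos\theta_{24}+\cos\theta_{14}\cos\theta_{23},$$
$$R=\sin(\theta_{13}+\theta_{23})+\sin(\theta_{14}+\theta_{24}),$$
the assertion is $P\cos\theta_{12}+Q\ge R\sin\theta_{12}$. First I would apply the elementary identity $\cos\theta_{13}\cos\theta_{23}=\cos(\theta_{13}+\theta_{23})+\sin\theta_{13}\sin\theta_{23}$ (and its companion for $\theta_{14},\theta_{24}$) to rewrite the difference between the two sides as
$$\cos(u+\theta_{12})+\cos(v+\theta_{12})+\big(\sin\theta_{13}\sin\theta_{23}+\sin\theta_{14}\sin\theta_{24}\big)\cos\theta_{12}+Q,$$
where I set $u=\theta_{13}+\theta_{23}$ and $v=\theta_{14}+\theta_{24}$. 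The point of this rewriting is that the dependence on $\theta_{12}$ is now concentrated in the two single cosines $\cos(u+\theta_{12})$ and $\cos(v+\theta_{12})$, which can be controlled by the angle constraints.

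Next I would invoke Proposition~\ref{Angoli-Acuti-et-al}. By item (2) all dihedral angles lie in $(0,\pi/2)$, so $\cos\theta_{12}>0$, the quantities $u,v$ lie in $(0,\pi)$, and the middle summand above is nonnegative and may be discarded. By item (1), in the form $u+v+\theta_{12}\le\sum_{i<j}\theta_{ij}\le\pi$, we get $u+\theta_{12}\le\pi-v$ and $v+\theta_{12}\le\pi-u$, all of these numbers lying in $(0,\pi)$; since cosine is decreasing on $(0,\pi)$ this yields $\cos(u+\theta_{12})\ge-\cos v$ and $\cos(v+\theta_{12})\ge-\cos u$. Combining, the whole expression is bounded below by $Q-\cos u-\cos v$, so it remains to prove the sharper, $\theta_{12}$-free inequality $Q\ge\cos u+\cos v$, i.e.
$$\cos\theta_{13}\cos\theta_{24}+\cos\theta_{14}\cos\theta_{23}\ \ge\ \cos(\theta_{13}+\theta_{23})+\cos(\theta_{14}+\theta_{24}).$$

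To establish this last inequality I would substitute $a=\cos\theta_{13}$, $b=\cos\theta_{23}$, $c=\cos\theta_{14}$, $d=\cos\theta_{24}$, all in $[0,1]$ by item (2). Expanding $\cos(\theta_{13}+\theta_{23})=ab-\sqrt{(1-a^2)(1-b^2)}$ and similarly for the other sum, the inequality becomes
$$(a-c)(d-b)+\sqrt{(1-a^2)(1-b^2)}+\sqrt{(1-c^2)(1-d^2)}\ \ge\ 0.$$
The left-hand side is, separately in each of $a,b,c,d$, the sum of an affine function (the product term) and a nonnegative multiple of a concave square root, hence it is concave in each coordinate; consequently its minimum over the cube $[0,1]^4$ is attained at a vertex. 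Evaluating at the finitely many vertices, where each variable is $0$ or $1$, one checks directly that the minimum equals $0$, which proves the inequality.

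The hard part is the last, $\theta_{12}$-free inequality $Q\ge\cos u+\cos v$: its two sides agree to first order at the fully degenerate corners (all angles $0$ or $\pi/2$), so any lossy estimate is doomed, and it is exactly the coordinatewise-concavity reduction to the $16$ vertices of the cube that renders it provable without delicate casework. It is worth noting that the argument uses only items (1) and (2) of Proposition~\ref{Angoli-Acuti-et-al}; item (3) is not needed here. The remaining steps—the trigonometric rewriting and the cosine monotonicity bounds—are routine once the decomposition into $\cos(u+\theta_{12})+\cos(v+\theta_{12})+(\text{nonnegative})+Q$ is in place.
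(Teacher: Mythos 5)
Your proof is correct, and its decisive step is genuinely different from the paper's. The opening move---rewriting the difference of the two sides as $\cos(u+\theta_{12})+\cos(v+\theta_{12})+(\sin\theta_{13}\sin\theta_{23}+\sin\theta_{14}\sin\theta_{24})\cos\theta_{12}+Q$ with $u=\theta_{13}+\theta_{23}$, $v=\theta_{14}+\theta_{24}$---is in the same spirit as the paper's Lemma~\ref{minimoserve}, but you then use $u+v+\theta_{12}\le\pi$ to get $\cos(u+\theta_{12})\ge-\cos v$ and $\cos(v+\theta_{12})\ge-\cos u$ directly, avoiding that lemma's case split on $h\ge\pi/2$ and arriving at the lower bound $Q-\cos u-\cos v$ rather than $Q-2\sin(\theta_{12}/2)$. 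The real divergence is in closing the residual inequality: the paper proves $Q\ge 2\sin(\theta_{12}/2)$ by a three-case analysis on $\theta_{12}$ (Lemmas~\ref{nuovestime} and~\ref{nuovestime2} plus a direct estimate for $\theta_{12}>\pi/3$), which relies on item~(3) of Proposition~\ref{Angoli-Acuti-et-al} and on several one-variable minimizations checked numerically; you instead prove the $\theta_{12}$-free inequality $Q\ge\cos u+\cos v$, valid for arbitrary $\theta_{13},\theta_{14},\theta_{23},\theta_{24}\in(0,\pi/2)$ with no further constraint, by recasting it as $(a-c)(d-b)+\sqrt{(1-a^2)(1-b^2)}+\sqrt{(1-c^2)(1-d^2)}\ge 0$ on $[0,1]^4$ and minimizing over the vertices of the cube. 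That reduction is sound: the function is affine plus a nonnegative multiple of the concave map $t\mapsto\sqrt{1-t^2}$ in each variable separately, and minimizing a coordinatewise-concave continuous function over a box one coordinate at a time (the pointwise minimum of two coordinatewise-concave functions being again coordinatewise concave) lands at a vertex; the only two vertices where the product term equals $-1$ give exactly $0$, and all others are nonnegative. The net effect is a shorter, cleaner proof that needs only items~(1) and~(2) of Proposition~\ref{Angoli-Acuti-et-al} and dispenses entirely with Lemmas~\ref{nuovestime} and~\ref{nuovestime2} and their ad hoc calculus estimates.
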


The rest of this section is entirely devoted to the proof of Proposition~\ref{tecnicofinale}. We first establish some preliminary lemmas.


\begin{lemma}\label{nuovestime2}
 Let $\theta_{ij}$, $1\leq i<j\leq 4$, be the dihedral angles of a truncated tetrahedron $\Delta$ such that $\pi/6\leq \theta_{12}\leq \pi/3$ and
 $
 \vol(\Delta)\geq \vol(\Delta_{\ell_0})
 $.
 Then
  $$
\cos\theta_{13}\cos\theta_{24}+\cos\theta_{14}\cos\theta_{23}  \geq 2\sin\frac{\theta_{12}}{2}\ .
$$
   \end{lemma}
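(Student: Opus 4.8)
The plan is to reduce the claimed inequality to a lower bound on the \emph{sum} of the four cosines $\cos\theta_{13}+\cos\theta_{14}+\cos\theta_{23}+\cos\theta_{24}$, and then to estimate that sum using the constraints on dihedral angles furnished by Proposition~\ref{Angoli-Acuti-et-al}.

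First I would get rid of the products. Since $1-\cos x\ge 0$ for every $x$, the elementary identity $\cos x\cos y=(\cos x+\cos y-1)+(1-\cos x)(1-\cos y)$ gives $\cos x\cos y\ge \cos x+\cos y-1$. Applying this to the pairs $(\theta_{13},\theta_{24})$ and $(\theta_{14},\theta_{23})$ yields
$$\cos\theta_{13}\cos\theta_{24}+\cos\theta_{14}\cos\theta_{23}\ \ge\ \cos\theta_{13}+\cos\theta_{14}+\cos\theta_{23}+\cos\theta_{24}-2\ .$$
Hence it suffices to prove that $\cos\theta_{13}+\cos\theta_{14}+\cos\theta_{23}+\cos\theta_{24}\ge 2+2\sin(\theta_{12}/2)$.

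To bound this sum below I would use Proposition~\ref{Angoli-Acuti-et-al}. Item~(3), applied to the vertex pairs $\{\theta_{12},\theta_{1k}\}$ and $\{\theta_{12},\theta_{2k}\}$, shows that each of $\theta_{13},\theta_{14},\theta_{23},\theta_{24}$ is at most $B:=\tfrac{7\pi}{12}-\theta_{12}$, and since $\theta_{12}\ge\pi/6$ we have $B<\pi/2$, so $\cos$ is concave on $[0,B]$. Item~(1) (together with $\theta_{34}>0$) gives $\theta_{13}+\theta_{14}+\theta_{23}+\theta_{24}\le \pi-\theta_{12}$. Because a concave function lies above its chord, we have $\cos t\ge 1+\tfrac{\cos B-1}{B}\,t$ for $t\in[0,B]$; summing over the four angles and using that the coefficient $\tfrac{\cos B-1}{B}$ is negative together with the bound on their total, I obtain
$$\cos\theta_{13}+\cos\theta_{14}+\cos\theta_{23}+\cos\theta_{24}\ \ge\ 4+\frac{\cos B-1}{B}\,(\pi-\theta_{12})\ .$$
Note that this step uses only the per-edge bound $\le B$ and the bound on the total sum; the remaining inequalities of Proposition~\ref{Angoli-Acuti-et-al} are not needed, so it is harmless that the extremal configuration of the chord estimate might itself be infeasible.

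It then remains to verify the single-variable inequality
$$4+\frac{\cos B-1}{B}\,(\pi-\theta_{12})\ \ge\ 2+2\sin\frac{\theta_{12}}{2}\,,\qquad B=\frac{7\pi}{12}-\theta_{12}\,,$$
for $\theta_{12}\in[\pi/6,\pi/3]$. A direct computation shows that the two sides coincide at $\theta_{12}=\pi/6$ (both equal $2+2\cos(5\pi/12)=2+2\sin(\pi/12)$), which is exactly why the hypothesis begins at $\pi/6$. The main, and only genuinely delicate, obstacle is to prove this last inequality rigorously: the margin is very thin just above $\theta_{12}=\pi/6$, so a crude estimate will not suffice. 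I would handle it by showing that the difference of the two sides vanishes at $\pi/6$ and stays nonnegative on $[\pi/6,\pi/3]$, e.g.\ by differentiating in $\theta_{12}$ (recalling $dB/d\theta_{12}=-1$) and controlling the sign of the derivative, or by a convexity argument reducing the check to the endpoints. Chaining the three displayed estimates then yields $\cos\theta_{13}\cos\theta_{24}+\cos\theta_{14}\cos\theta_{23}\ge 2\sin(\theta_{12}/2)$, as required.
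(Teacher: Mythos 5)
Your argument is correct and takes a genuinely different route from the paper's. You linearize each product via $\cos x\cos y\ge\cos x+\cos y-1$ and then bound $\cos\theta_{13}+\cos\theta_{14}+\cos\theta_{23}+\cos\theta_{24}$ from below by the chord of $\cos$ on $[0,B]$ with $B=\tfrac{7}{12}\pi-\theta_{12}$, using only items (1) and (3) of Proposition~\ref{Angoli-Acuti-et-al}; this collapses the whole lemma into the single one-variable inequality $4+\tfrac{\cos B-1}{B}(\pi-\theta_{12})\ge 2+2\sin(\theta_{12}/2)$ on $[\pi/6,\pi/3]$. The paper instead fixes (WLOG) the pair with $\theta_{13}+\theta_{24}\le(\pi-\theta_{12})/2$, splits into the cases $\theta_{13}+\theta_{24}\le\pi/6$ and $\theta_{13}+\theta_{24}\ge\pi/6$, and uses product-to-sum identities, ending with two separate one-variable checks. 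Your route is cleaner (one case, one final check); the paper's retains more trigonometric structure and its final checks have more slack. I checked that your final inequality is true: the two sides coincide at $\theta_{12}=\pi/6$ (both equal $2+2\sin(\pi/12)$, as you note), and the derivative of the difference stays between roughly $0.39$ and $0.43$ on all of $[\pi/6,\pi/3]$, so the monotonicity argument you propose does close the proof. Two caveats, neither fatal. First, you leave that last verification as a sketch; since this is the only point where the hypothesis $\theta_{12}\ge\pi/6$ enters and the margin vanishes at the left endpoint, you should actually carry out the derivative computation (the paper holds its own one-variable functions to the same standard, so an explicit monotonicity claim with justification suffices). Second, your fallback of ``a convexity argument reducing the check to the endpoints'' will not work: the difference of the two sides is neither convex nor concave on $[\pi/6,\pi/3]$ (its derivative rises from about $0.40$ at $\pi/6$ to about $0.43$ near $\pi/4$ and falls back to about $0.40$ at $\pi/3$), so the derivative-sign route is the one to use.
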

\begin{proof}
Since $\theta_{13}+\theta_{24}+\theta_{14}+\theta_{23}\leq \pi-\theta_{12}$, without loss of generality we may assume that
$\theta_{13}+\theta_{24}\leq (\pi-\theta_{12})/2$. 

Suppose now $\theta_{13}+\theta_{24}\leq \pi/6$. Then $\cos\theta_{13}\cos\theta_{24}\geq \cos \pi/6=\sqrt{3}/2$. Moreover,
 $\theta_{14}$ and $\theta_{23}$ are both strictly smaller than $(7/12)\pi-\theta_{12}$ (which is smaller than $(7/12)\pi-\pi/6<\pi/2$), hence
\begin{align*}
 \cos\theta_{13}\cos\theta_{24}+\cos\theta_{14}\cos\theta_{23}& \geq \frac{\sqrt{3}}{2}+ \cos^2((7/12)\pi-\theta_{12})\\  & =  
\frac{\sqrt{3}+1+\cos((7/6)\pi-2\theta_{12})}{2}
\\ & \geq 2\sin \frac{\theta_{12}}{2}\ ,
\end{align*}
where the last inequality is due to the fact that the function
$$
\left(\frac{\pi}{6},\frac{\pi}{3}\right) \to \R\, ,\qquad
\theta \mapsto \frac{\sqrt{3}+1+\cos((7/6)\pi-2\theta)}{2}-2\sin\frac{\theta}{2}
$$
attains it minimum at $\theta=(4/15)\pi$, where it is positive.
This concludes the proof under the assumption
$\theta_{13}+\theta_{24}\leq \pi/6$.

We may thus assume that $\pi/6\leq \theta_{13}+\theta_{24}\leq (\pi-\theta_{12})/2$. Since the map 
$x\mapsto \cos x-\cos (x+\theta_{12})$ is increasing on $(0,(\pi-\theta_{12})/2)$, we then have
$$
\cos (\theta_{13}+\theta_{24})-\cos (\theta_{13}+\theta_{24}+\theta_{12})\geq \cos (\pi/6)-\cos(\pi/6-\theta_{12})\ .
$$
Also observe that, since $\max\{\theta_{13},\theta_{23},\theta_{14},\theta_{24}\}\leq (7/12)\pi-\theta_{12}$, we have
$$
|\theta_{13}-\theta_{24}|\leq \frac{7\pi}{12}-\theta_{12}\, ,\quad
|\theta_{14}-\theta_{23}|\leq \frac{7\pi}{12}-\theta_{12}\, ,
$$
while the condition
$\max\{\theta_{13},\theta_{24}\}+\max\{\theta_{23},\theta_{14}\}\leq (7/12)\pi$ implies that
$$
|\theta_{13}-\theta_{24}|+|\theta_{14}-\theta_{23}|\leq \frac{7\pi}{12}\ ,
$$
hence
\begin{align*}
\cos (\theta_{13}-\theta_{24})+\cos (\theta_{14}-\theta_{23}) &=\cos |\theta_{13}-\theta_{24}|+\cos |\theta_{14}-\theta_{23}| \\
&\geq \cos \left(\frac{7\pi}{12}-\theta_{12}\right)+\cos \theta_{12}\ .
\end{align*}
Putting together all these inequalities we get
\begin{align*}
& \cos\theta_{13}\cos\theta_{24}+\cos\theta_{14}\cos\theta_{23} \\ = & \frac{\cos (\theta_{13}+\theta_{24})+\cos (\theta_{14}+\theta_{23})}{2}+
 \frac{\cos (\theta_{13}-\theta_{24})+\cos (\theta_{14}-\theta_{23})}{2}\\ \geq &
 \frac{\cos (\theta_{13}+\theta_{24})+\cos (\pi-\theta_{12}-\theta_{13}-\theta_{24})}{2}+
 \frac{\cos (\theta_{13}-\theta_{24})+\cos (\theta_{14}-\theta_{23})}{2}\\
 = & \frac{\cos (\theta_{13}+\theta_{24})-\cos (\theta_{13}+\theta_{24}+\theta_{12})}{2}+
 \frac{\cos (\theta_{13}-\theta_{24})+\cos (\theta_{14}-\theta_{23})}{2}\\
 \geq &
  \frac{\cos (\pi/6)-\cos(\pi/6+\theta_{12})+ \cos \left((7/12)\pi-\theta_{12}\right)+\cos \theta_{12}}{2}\\
  \geq & 2\sin\frac{\theta_{12}}{2}\ ,
\end{align*}
where the last inequality is due to the fact that
 the function $$\theta\mapsto  \frac{\cos (\pi/6)-\cos(\pi/6+\theta)+ \cos \left((7/12)\pi-\theta\right)+\cos \theta}{2}-2\sin\frac{\theta}{2}$$ 
is decreasing on $(\pi/6,\pi/3)$ and positive at $\pi/3$.

\end{proof}

\begin{lemma}\label{nuovestime}
 Let $\theta_{ij}$, $1\leq i<j\leq 4$ be the dihedral angles of a truncated tetrahedron $\Delta$ such that
 $
 \vol(\Delta)\geq \vol(\Delta_{\ell_0})
 $.
 Then
 $$
 \cos\theta_{13}\cos\theta_{24}+\cos\theta_{14}\cos\theta_{23}\geq 1-\sin \frac{\pi}{12}\ .
 $$
   \end{lemma}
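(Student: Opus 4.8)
The plan is to reduce this four-variable estimate to a one-variable calculus problem, using only items (2) and (3) of Proposition~\ref{Angoli-Acuti-et-al}; the hypothesis $\vol(\Delta)\geq\vol(\Delta_{\ell_0})$ enters exactly through these two facts, namely that every $\theta_{ij}<\pi/2$ and that $\theta_{ij}+\theta_{ik}<\tfrac{7}{12}\pi$ whenever the edges $e_{ij},e_{ik}$ share a vertex. I would set $M=\max\{\theta_{13},\theta_{24}\}$ and organize the argument so that the first product is controlled from below in terms of $M$ and the second one is controlled from below via the vertex-sum constraints.

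First I would dispose of the trivial range $M<\pi/12$: then $\theta_{13},\theta_{24}<\pi/12$, so $\cos\theta_{13}\cos\theta_{24}>\cos^2(\pi/12)$, which already exceeds $1-\sin(\pi/12)$; since all angles are acute by item (2) we have $\cos\theta_{14}\cos\theta_{23}\geq 0$, and the inequality follows at once. For the main range $\pi/12\leq M<\pi/2$ the two key observations are: (a) as $\min\{\theta_{13},\theta_{24}\}\leq M$ and cosine is positive and decreasing on $[0,\pi/2)$, one gets $\cos\theta_{13}\cos\theta_{24}\geq\cos^2 M$; and (b) each of the four mixed sums $\theta_{14}+\theta_{13}$, $\theta_{14}+\theta_{24}$, $\theta_{23}+\theta_{13}$, $\theta_{23}+\theta_{24}$ pairs two edges meeting at a common vertex, so item (3) yields $\theta_{14},\theta_{23}\leq\tfrac{7}{12}\pi-M$. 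Since $\tfrac{7}{12}\pi-M\in(0,\pi/2]$ in this range, monotonicity of cosine gives $\cos\theta_{14}\cos\theta_{23}\geq\cos^2\!\bigl(\tfrac{7}{12}\pi-M\bigr)$. Combining (a) and (b) reduces the statement to the scalar inequality $h(M)\geq 1-\sin(\pi/12)$, where $h(M)=\cos^2 M+\cos^2\!\bigl(\tfrac{7}{12}\pi-M\bigr)$.

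It then remains to minimize $h$ on $[\pi/12,\pi/2)$. Applying the product-to-sum identity I would rewrite $h(M)=1+\cos\bigl(\tfrac{7}{12}\pi\bigr)\cos\bigl(2M-\tfrac{7}{12}\pi\bigr)=1-\sin\bigl(\tfrac{\pi}{12}\bigr)\cos\bigl(2M-\tfrac{7}{12}\pi\bigr)$, so that the minimum is reached when $\cos\bigl(2M-\tfrac{7}{12}\pi\bigr)=1$, i.e.\ at $M=\tfrac{7}{24}\pi$, which lies in the admissible range and gives $h=1-\sin(\pi/12)$; the endpoint values $h(\pi/12)=h(\pi/2)=\cos^2(\pi/12)$ are larger, confirming this is the global minimum. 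This proves the lemma.

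The only genuinely delicate point is the bookkeeping of the vertex-sharing relations in step (b): the two products in the statement pair \emph{opposite} edges ($e_{13}$ with $e_{24}$, and $e_{14}$ with $e_{23}$), for which item (3) says nothing, whereas every \emph{cross} pair between the two products does share a vertex, which is precisely what licenses the bound $\theta_{14},\theta_{23}\leq\tfrac{7}{12}\pi-M$. Everything else is routine monotonicity plus a one-variable computation, so I expect no serious obstacle. I would only remark that the extremal value $1-\sin(\pi/12)$ is the interior minimum of $h$, attained at the symmetric configuration $\theta_{13}=\theta_{24}=\theta_{14}=\theta_{23}=\tfrac{7}{24}\pi$, which violates item (1) of Proposition~\ref{Angoli-Acuti-et-al}; hence the estimate, though clean, is not sharp on the actual domain, but it is amply sufficient for the intended application.
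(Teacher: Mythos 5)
Your proof is correct and follows essentially the same route as the paper's: both use item (2) of Proposition~\ref{Angoli-Acuti-et-al} for positivity and item (3) applied to the four vertex-sharing cross pairs to reduce the claim to minimizing the one-variable function $\cos^2 x+\cos^2\bigl(\tfrac{7}{12}\pi-x\bigr)$, whose minimum is $1+\cos\tfrac{7\pi}{12}=1-\sin\tfrac{\pi}{12}$. The only (cosmetic) difference is that the paper takes $\theta_{13}$ to be the maximum of all four angles and splits into the cases $\theta_{13}\le\tfrac{7}{24}\pi$ and $\theta_{13}>\tfrac{7}{24}\pi$, handling the latter by monotonicity, whereas you take $M=\max\{\theta_{13},\theta_{24}\}$ and minimize the scalar function globally via the product-to-sum identity.
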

\begin{proof}
Without loss of generality we may assume $\theta_{13}=\max\{\theta_{13},\theta_{14},\theta_{23},\theta_{24}\}$.
If $\theta_{13}\leq (7/24)\pi$, then 
$$ \cos \theta_{13}\cos\theta_{24}+\cos\theta_{14}\cos\theta_{23}\geq
2\cos^2\frac{7\pi}{24}=1+\cos\frac{7\pi}{12}=1-\sin\frac{\pi}{12}\ .
$$
Otherwise, both $\theta_{23}$ and $\theta_{14}$ are not bigger than $(7/12)\pi-\theta_{13}$, and
$\theta_{24}\leq \theta_{13}$, so 
\begin{align*}
&\cos \theta_{13}\cos\theta_{24}+\cos\theta_{14}\cos\theta_{23}\geq 
\cos^2\theta_{13}+\cos^2\left(\frac{7\pi}{12}-\theta_{13}\right)\\ = &
1+\frac{\cos 2\theta_{13}+\cos (7\pi/6-2\theta_{13})}{2}\geq 1+\cos\frac{7\pi}{12}=1-\sin\frac{\pi}{12}\ ,
\end{align*}
since the map
$\theta\mapsto \cos 2\theta+\cos (7\pi/6-2\theta)$ is increasing on $(7\pi/24,\pi/2)$.
\end{proof}

\begin{lemma}\label{minimoserve}
Let $\theta_{ij}$, $1\leq i<j\leq 4$ be the dihedral angles of a truncated tetrahedron $\Delta$ such that
 $
 \vol(\Delta)\geq \vol(\Delta_{\ell_0})
 $.
 Then
\begin{align*}
&\cos\theta_{12}\left(\cos\theta_{13}\cos\theta_{23}+\cos\theta_{14}\cos\theta_{24}\right)-\sin \theta_{12}(\sin(\theta_{13}+\theta_{23})+\sin(\theta_{14}+\theta_{24})) \\
&\geq -2\sin\frac{\theta_{12}}{2}\ .
\end{align*}
\end{lemma}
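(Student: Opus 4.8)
The plan is to exploit the fact that the left-hand side is a sum of two structurally identical pieces, and to reduce the whole inequality to a single-variable estimate controlled by the total-angle bound of Proposition~\ref{Angoli-Acuti-et-al}. Throughout I write $t=\theta_{12}$ and set
\[
s_1=\theta_{13}+\theta_{23},\qquad s_2=\theta_{14}+\theta_{24}.
\]
For a pair of indices I isolate the quantity $\Phi(\theta_{13},\theta_{23})=\cos t\,\cos\theta_{13}\cos\theta_{23}-\sin t\,\sin(\theta_{13}+\theta_{23})$, and analogously $\Phi(\theta_{14},\theta_{24})$, so that the left-hand side of the lemma is exactly $\Phi(\theta_{13},\theta_{23})+\Phi(\theta_{14},\theta_{24})$.

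The key elementary observation is the pointwise bound $\Phi(\theta_{13},\theta_{23})\ge \cos(t+s_1)$. Indeed, using $\cos\theta_{13}\cos\theta_{23}-\cos(\theta_{13}+\theta_{23})=\sin\theta_{13}\sin\theta_{23}$ one computes
\[
\Phi(\theta_{13},\theta_{23})-\cos(t+s_1)=\cos t\,\sin\theta_{13}\sin\theta_{23}\ \ge\ 0,
\]
the non-negativity being guaranteed by Proposition~\ref{Angoli-Acuti-et-al}(2), which forces every $\theta_{ij}\in(0,\pi/2)$ and hence makes all the sines and $\cos t$ positive. Summing this with the analogous bound for the other pair reduces the lemma to
\[
\cos(t+s_1)+\cos(t+s_2)\ \ge\ -2\sin\tfrac{t}{2}.
\]
Here the coupling between the two pairs enters only through $s_1+s_2$. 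I would rewrite the left-hand side as $2\cos\!\big(t+\tfrac{s_1+s_2}{2}\big)\cos\!\big(\tfrac{s_1-s_2}{2}\big)$, and invoke Proposition~\ref{Angoli-Acuti-et-al}(1) together with $\theta_{34}>0$ to get $s_1+s_2\le \pi-t-\theta_{34}<\pi-t$. Consequently $t+\tfrac{s_1+s_2}{2}\le \tfrac{\pi+t}{2}<\pi$ and $\big|\tfrac{s_1-s_2}{2}\big|<\pi/2$, so $\cos\!\big(\tfrac{s_1-s_2}{2}\big)>0$. If $t+\tfrac{s_1+s_2}{2}\le \pi/2$ the product is non-negative and there is nothing to prove; otherwise the first factor is negative, so multiplying it by $\cos\!\big(\tfrac{s_1-s_2}{2}\big)\in(0,1]$ can only increase it, and since cosine is decreasing on $[0,\pi]$ I conclude
\[
2\cos\!\Big(t+\tfrac{s_1+s_2}{2}\Big)\cos\!\Big(\tfrac{s_1-s_2}{2}\Big)\ \ge\ 2\cos\!\Big(t+\tfrac{s_1+s_2}{2}\Big)\ \ge\ 2\cos\tfrac{\pi+t}{2}=-2\sin\tfrac{t}{2},
\]
which closes the argument.

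The only genuinely delicate point is the first reduction. The naive attempt to bound each summand $\Phi$ separately by $-\sin(t/2)$ is too weak: one checks directly that for small $t$ (e.g.\ $t=\pi/12$, with $\theta_{13}\to\pi/2$, $\theta_{23}\to 0$) a single $\Phi$ already dips below $-\sin(t/2)$, so the two-pair structure and the total-angle constraint are both essential. What makes the proof work is that the clean estimate $\Phi\ge\cos(t+s_i)$ discards nothing that matters while packaging the whole dependence on the four angles into $s_1+s_2$, which Proposition~\ref{Angoli-Acuti-et-al}(1) controls. I expect no serious obstacle beyond spotting this identity; it is worth noting that only items (1) and (2) of Proposition~\ref{Angoli-Acuti-et-al} are used, the sharper bound (3) being unnecessary for this lemma.
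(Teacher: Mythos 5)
Your proof is correct and follows essentially the same route as the paper's: bound each product $\cos\theta_{1k}\cos\theta_{2k}$ from below by the cosine of the sum, combine the two resulting terms via sum-to-product, and finish using the total-angle bound $s_1+s_2\le\pi-\theta_{12}$ from Proposition~\ref{Angoli-Acuti-et-al}(1). The one small improvement is that your identity $\Phi-\cos(t+s_i)=\cos t\,\sin\theta_{1i'}\sin\theta_{2i'}\ge 0$ makes the bound $\Phi\ge\cos(t+s_i)$ hold unconditionally, so you avoid the separate case $h\ge\pi/2$ that the paper treats by a different estimate.
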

\begin{proof}
Let $k=\theta_{13}+\theta_{23}$, $h=\theta_{14}+\theta_{24}$. 
Using that $\theta_{13},\theta_{23},\theta_{14}$ and $\theta_{24}$ belong to $(0,\pi/2)$ it is easy to check that $\cos\theta_{13}\cos\theta_{23}\geq \max \{0,\cos k\}$ and
$\cos\theta_{14}\cos\theta_{24}\geq \max \{0,\cos h\}$. Assume first that $h\leq \pi/2$ and $k\leq \pi/2$. Then
\begin{align*}
&\cos\theta_{12}\left(\cos\theta_{13}\cos\theta_{23}+\cos\theta_{14}\cos\theta_{24}\right)-\sin \theta_{12}(\sin(\theta_{13}+\theta_{23})+\sin(\theta_{14}+\theta_{24}))\\ \geq &
\cos\theta_{12}\cos h-\sin\theta_{12}\sin h+\cos\theta_{12}\cos k-\sin\theta_{12}\sin k\\ & =\cos(\theta_{12}+h)+\cos(\theta_{12}+k) 
=  2\cos \left(\frac{h+k}{2}+\theta_{12}\right)\cos\frac{h-k}{2}\ .
\end{align*}
Since $h+k\leq \pi-\theta_{12}$ we have $0\leq (h+k)/2+\theta_{12}\leq \pi/2+\theta_{12}/2$, so $\cos((h+k)/2+\theta_{12})\geq \cos(\pi/2+\theta_{12}/2)=-\sin(\theta_{12}/2)$,
and the conclusion follows.

Assume now $h\geq \pi/2$. Since $h+k\leq \pi-\theta_{12}$, we necessarily have $k\leq \pi/2-\theta_{12}$, hence
 \begin{align*}
&\cos\theta_{12}\left(\cos\theta_{13}\cos\theta_{23}+\cos\theta_{14}\cos\theta_{24}\right)-\sin \theta_{12}(\sin(\theta_{13}+\theta_{23})+\sin(\theta_{14}+\theta_{24}))\\ \geq &
-\sin\theta_{12}\sin h+\cos\theta_{12}\cos k-\sin\theta_{12}\sin k\geq -\sin\theta_{12} +\cos(\theta_{12}+k)\\ 
\geq & -\sin\theta_{12}\geq -2\sin\frac{\theta_{12}}{2}\ ,
\end{align*}
and we are done.
\end{proof}

We are finally ready to prove Proposition~\ref{tecnicofinale}.
Let $\theta_{ij}$, $i,j\in\{1,\ldots,4\}$, $i<j$ be the dihedral angles of a truncated tetrahedron $\Delta$ such that
$\vol(\Delta)\geq \vol(\Delta_{\ell_0})$.

By Lemma~\ref{minimoserve}, it is sufficient to prove that
\begin{equation}\label{stimariscritta}
\cos\theta_{13}\cos\theta_{24}+\cos\theta_{14}\cos\theta_{23} \geq 
2\sin\frac{\theta_{12}}{2}\ .
\end{equation}
If $\theta_{12}\leq \pi/6$, then $2\sin (\theta_{12}/2)\leq 2\sin (\pi/12)$, so 
this inequality readily follows from Lemma~\ref{nuovestime}, together with the fact that
$3\sin(\pi/12)<1$. 

The case when 
$\pi/6\leq \theta_{12}\leq \pi/3$ is proved in Lemma~\ref{nuovestime2}, so we may assume $\pi/3<\theta_{12}<\pi/2$.

Since $\max\{\theta_{13},\theta_{14},\theta_{23},\theta_{24}\}\leq (7/12)\pi-\theta_{12}$, we have
$$
\cos\theta_{13}\cos\theta_{24}+\cos\theta_{14}\cos\theta_{23}\geq 2\cos^2((7/12)\pi-\theta_{12})=1+\cos \left(\frac{7\pi}{6}-2\theta_{12}\right)\ ,
$$
and the conclusion follows since the map
$$
\theta\mapsto 1+\cos \left(\frac{7\pi}{6}-2\theta\right)-2\sin \frac{\theta}{2}
$$
vanishes at $\theta=\pi/3$, and it is strictly increasing on $[\pi/3,\pi/2)$.

\section{Concluding remarks}\label{final1}
As mentioned in the introduction, we believe that Theorem~\ref{main:thm} holds in general, i.e.~without the assumption $\ell\leq \ell_0$. 
It seems likely that a useful tool to approach Conjecture~\ref{main:conj} should be the concavity of the volume as a function of dihedral angles.
As already observed in Remark~\ref{rem-conto-numerico}, the volume  cannot be neither concave nor convex with respect to edge lengths.
Moreover, 
the domain $\calL\subseteq \R^6$ is not convex. These facts seem to suggest that
one cannot completely avoid the use of dihedral angles when trying to maximize or minimize volumes of truncated tetrahedra, even when constraints are expressed in terms of edge lengths.

\begin{conj}\label{prima:conj}
Take $\ell\in\R$, 
 let $\Delta\in\calT_{\ell}$ and denote by $\theta_{ij}$, $1\leq i<j\leq 4$, the dihedral angles of $\Delta$. Also
 let $\theta'=(\theta_{12}+\theta_{13}+\theta_{14}+\theta_{23}+\theta_{24}+\theta_{34})/6$, and denote by $\Delta'\in\calT$ the
 regular tetrahedron with dihedral angles all equal to $\theta'$. Then $\Delta'\in \calT_{\ell}$. 
 
 In other words, if every edge length of $\Delta$ is not smaller than $\ell$ and $\Delta'$ is obtained
 by assigning to every edge the average of the dihedral angles of $\Delta$, then the edge length of $\Delta'$ is not strictly smaller than $\ell$.
 \end{conj}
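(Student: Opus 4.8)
The plan is to convert Conjecture~\ref{prima:conj} into a statement about the \emph{total} dihedral angle and then attack that statement by the length-shrinking deformation used in the proof of Theorem~\ref{main:thm}. Set $S(\Delta)=\sum_{1\le i<j\le 4}\theta_{ij}(\Delta)$ and write $\theta_\ell$ for the common dihedral angle of $\Delta_\ell$, so that $S(\Delta)=6\theta'$ and $S(\Delta_\ell)=6\theta_\ell$. Since $\Delta'$ is regular and the edge length of a regular truncated tetrahedron is a strictly increasing function of its dihedral angle (this is the monotonicity used in the proof of Lemma~\ref{monotone}, read off from~\eqref{lengthstoangles}), the assertion $\Delta'\in\calT_\ell$ is \emph{equivalent} to $\theta'\ge\theta_\ell$, i.e.\ to $S(\Delta)\ge S(\Delta_\ell)$ for every $\Delta\in\calT_\ell$. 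Because the total truncation area equals $4\pi-2S$, this says exactly that $\Delta_\ell$ maximizes $A_\partial$ over $\calT_\ell$; for $\ell\le\ell_0$ it is precisely the Corollary following Theorem~\ref{main:thm}, so the genuine content lies in the range $\ell>\ell_0$ (where Theorem~\ref{main:thm} applied at $\ell_0$ only yields $\vol(\Delta)\le\vol(\Delta_{\ell_0})$, giving no control on the sign of $S-S(\Delta_\ell)$).

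To prove $S(\Delta)\ge S(\Delta_\ell)$ I would run the deformation of Lemma~\ref{lemmino}: shrink the set of longest edges of $\Delta$ down to the second-longest length and repeat, reducing the number of distinct lengths until one reaches $\Delta_{\ell_{\min}(\Delta)}$, where $\ell_{\min}(\Delta)=\min_{ij}\ell_{ij}(\Delta)\ge\ell$. The only new input is that each step \emph{decreases} $S$. Performing the derivative computation of Proposition~\ref{key} (from~\cite{Guo} and~\eqref{Schlafli-Formula2}) but with every edge length set equal to $1$, one finds that $\partial S/\partial\ell_{12}=\sum_{hk}\partial\theta_{hk}/\partial\ell_{12}$ is a \emph{positive} multiple of
\[
\begin{aligned}
&\cos\theta_{12}(\cos\theta_{13}\cos\theta_{23}+\cos\theta_{14}\cos\theta_{24})+\cos\theta_{13}\cos\theta_{24}+\cos\theta_{14}\cos\theta_{23}+\sin\theta_{12}\sin\theta_{34}\\
&\qquad-\ \sin\theta_{12}\bigl(\sin(\theta_{13}+\theta_{23})+\sin(\theta_{14}+\theta_{24})\bigr),
\end{aligned}
\]
which is Proposition~\ref{tecnicofinale} reinforced by the retained term $\sin\theta_{12}\sin\theta_{34}$. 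If this quantity is positive along the deformation then $S$ strictly decreases, giving $S(\Delta)\ge S(\Delta_{\ell_{\min}(\Delta)})=6\theta_{\ell_{\min}(\Delta)}\ge 6\theta_\ell=S(\Delta_\ell)$. The boundary bookkeeping copies Lemma~\ref{lemmino}: the path stays in $[\ell,+\infty)^6\subseteq(0,+\infty)^6$, and were it to reach $\partial\calL\cap(0,+\infty)^6$, then by Proposition~\ref{boundarypoints} the angles would tend to $(0,0,\pi,0,0,\pi)$, so $S\to 2\pi$; but summing the four defining inequalities of $\calO$ gives $S<2\pi$ on $\calO$, contradicting that $S$ is decreasing. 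Hence the path remains in $\calL$ and the induction terminates.

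The hard part — and the reason this stays a conjecture — is the pointwise inequality displayed above. Unlike its volume analogue it is \emph{not} valid on all of $\calO$: at $\theta_{12}=\pi/2$, $\theta_{13}=\theta_{14}=\theta_{23}=\theta_{24}=\pi/5$, $\theta_{34}\to 0^+$ the first four terms tend to $2\cos^2(\pi/5)\approx 1.31$ while the last tends to $2\sin(2\pi/5)\approx 1.90$, so the expression is negative. By~\eqref{anglestolengths}, however, this configuration has $\ell_{34}\to 0$, so it enters $\calT_\ell$ only for small $\ell$, where the conjecture is already known. The crux is therefore to show that the constraint ``all edges $\ge\ell>\ell_0$'' confines the dihedral angles, through~\eqref{lengthstoangles}, away from such degenerations — concretely, to bound every $\theta_{ij}$ below by an explicit $\delta(\ell)>0$ — and then to verify the inequality on that restricted region, where the now non-negligible term $\sin\theta_{12}\sin\theta_{34}$ should supply exactly the missing slack. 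Lacking such uniform angle bounds, a fallback would be a global KKT analysis of $S$ on $\calT_\ell$, showing that no non-regular configuration can be a constrained critical point.
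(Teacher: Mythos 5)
First, a point of order: the paper does not prove this statement. It is Conjecture~\ref{prima:conj}, stated in Section~\ref{final1} and explicitly left open, so there is no proof of record to measure yours against. Your preliminary reduction is nevertheless correct and consistent with the paper's own remarks: since the common edge length of a regular truncated tetrahedron is a strictly increasing function of its dihedral angle (the monotonicity underlying Lemma~\ref{monotone}), the assertion $\Delta'\in\calT_\ell$ is equivalent to $\sum_{i<j}\theta_{ij}(\Delta)\geq\sum_{i<j}\theta_{ij}(\Delta_\ell)$, which for $\ell\leq\ell_0$ is exactly the Corollary stated in the introduction; the genuine content is the range $\ell>\ell_0$. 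Your deformation scheme and the boundary bookkeeping (the sum $S$ of the angles satisfies $S<2\pi$ on $\calO$, while the relevant boundary points of $\calL$ force $S=2\pi$) are also sound, \emph{conditionally on the monotonicity of $S$ along the path}.

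That conditional clause is where the genuine gap lies, and you identify it yourself: monotonicity of $S$ under shrinking the longest edge requires pointwise positivity of $\sum_{h<k}\partial\theta_{hk}/\partial\ell_{12}$, and your example $(\theta_{12},\theta_{13},\theta_{14},\theta_{34},\theta_{24},\theta_{23})=(\pi/2,\pi/5,\pi/5,\epsilon,\pi/5,\pi/5)$ shows that the corresponding trigonometric expression changes sign on $\calO$. In the proof of Theorem~\ref{main:thm} the analogous difficulty is overcome by Proposition~\ref{Angoli-Acuti-et-al}, which converts the running hypothesis $\vol(\Delta)\geq\vol(\Delta_{\ell_0})$ into explicit angle bounds ($\theta_{ij}<\pi/2$, $\theta_{ij}+\theta_{ik}<7\pi/12$) that feed Lemmas~\ref{nuovestime2}--\ref{minimoserve}; your scheme has no analogue of this step, because the only invariant preserved along your path is ``all edge lengths $\geq\ell$,'' and you do not extract from it, via~\eqref{lengthstoangles}, any bounds on the $\theta_{ij}$ that would exclude the bad region (note also that Lemma~\ref{continuous:ext} goes in the wrong direction for this: lengths determine angles continuously up to the boundary, but controlling angles from below in terms of a lower bound on lengths is precisely the delicate point, cf.\ Remark~\ref{flat:rem}). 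Until such bounds are established and the reinforced inequality is verified on the resulting region, what you have is a plausible strategy and a correct reformulation, not a proof --- which is consistent with the statement's status as an open conjecture in the paper.
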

 
 Conjecture~\ref{prima:conj} easily implies Conjecture~\ref{main:conj}: indeed, if Conjecture~\ref{prima:conj} holds, then Proposition~\ref{prop:volume:max:reg:somma:ang:uguale} implies
 that
 $$\vol(\Delta)\leq \vol(\Delta')\leq \vol(\Delta_{\ell})\ ,$$
 where the last inequality is due to Lemma~\ref{monotone}.
 
 In order to state another related conjecture, let us first observe that the symmetric group $\mathfrak{S}_4$ acts on $\calT$ via permutations of the (removed) vertices
 of truncated tetrahedra:
 if $\sigma\in \mathfrak{S}_4$, then $\theta_{ij}(\sigma\cdot\Delta)=\theta_{\sigma(i)\sigma(j)}(\Delta)$ for every $1\leq i<j\leq 4$. 
 The tetrahedra $\Delta$ and $\sigma\cdot \Delta$ are isometric (via an isometry which does not preserve the marking), hence
 $\vol(\sigma\cdot \Delta)=\vol(\Delta)$, and $\sigma\cdot \Delta\in \calT_{\ell}$ if $\Delta\in\calT_{\ell}$. 
 It readily follows from the definitions that $\Delta$ is regular if and only if $\sigma\cdot\Delta=\Delta$
 for every $\sigma\in\mathfrak{S}_4$. 
 
 \begin{conj}\label{prima2:conj}
Take $\ell\in\R$, 
 let $\Delta\in\calT_{\ell}$ be a non-regular truncated tetrahedron and denote by $\calO_\Delta\subseteq \calO$ the convex hull of the set
 $$
 \{\Theta(\sigma\cdot \Delta)\, ,\ \sigma\in\mathfrak{S}_4\}\ .
 $$
 Then the intersection
 $$
\left( \calO_\Delta\setminus \{\Theta(\sigma\cdot \Delta)\, ,\ \sigma\in\mathfrak{S}_4\}\right)\cap \Theta(\calT_{\ell})
 $$
 is non-empty.

 In other words, the convex hull (with respect to dihedral angles) of the tetrahedra obtained as symmetric images of $\Delta$ intersects
 $\calT_{\ell}$ in other points besides its vertices.
 \end{conj}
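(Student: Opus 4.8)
The plan is to split the argument according to whether $\Delta$ lies in the interior or on the boundary of $\calT_{\ell}$, the boundary case being the only substantial one. Write $v_\sigma=\Theta(\sigma\cdot\Delta)$ for the vertices of $\calO_\Delta$, and regard the six length functions $\ell_{ij}=\ell_{ij}\circ\Theta^{-1}\colon\calO\to\R$. As noted above, $\sigma\cdot\Delta\in\calT_\ell$ for every $\sigma$, so $\ell_{ij}(v_\sigma)\geq\ell$ for all $\sigma$ and all $i<j$. If every edge of $\Delta$ has length \emph{strictly} larger than $\ell$, then $v_0=\Theta(\Delta)$ lies in the interior of $\Theta(\calT_\ell)$; since $\Delta$ is non-regular the orbit contains a vertex $v_\tau\neq v_0$, and for small $t>0$ the point $(1-t)v_0+t v_\tau$ is a non-vertex point of $\calO_\Delta$ still lying in the interior of $\Theta(\calT_\ell)$. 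This already settles the interior case.

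The remaining case is $\Delta\in\partial\calT_\ell$, i.e. the set $S=\{(i,j)\colon \ell_{ij}(\Delta)=\ell\}$ is non-empty. Near $v_0$ the region $\Theta(\calT_\ell)$ is cut out by the active constraints $\ell_{ij}\geq\ell$, $(i,j)\in S$, so it suffices to produce a non-zero direction in the tangent cone $K=\mathrm{cone}\{v_\sigma-v_0\}$ of $\calO_\Delta$ at $v_0$ along which every active length function is non-decreasing, that is a $d\in K$ with $\nabla\ell_{ij}(v_0)\cdot d\geq 0$ for all $(i,j)\in S$; the inactive constraints stay strictly satisfied for small displacements by continuity. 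If such a $d$ with at least one strict inequality exists, a short step from $v_0$ in the direction $d$ reaches a non-vertex point of $\calO_\Delta$ inside $\Theta(\calT_\ell)$, proving the statement.

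The canonical candidate is the barycenter $b$ of the vertex set $\{v_\sigma\}$. Because $\mathfrak{S}_4$ acts on $\R^6$ by permuting the six edge-coordinates, and this action is transitive on edges, $b$ is fixed by the action and hence has all six coordinates equal: $\Theta^{-1}(b)$ is the regular tetrahedron $\Delta'$ whose common dihedral angle is the average of the angles of $\Delta$. By Proposition~\ref{prop:volume:max:reg:somma:ang:uguale} together with the strict concavity of $V_\Theta$ one gets $\vol(\Delta')>\vol(\Delta)$, so $b$ is automatically a volume-increasing non-vertex point; the only missing ingredient is $b\in\Theta(\calT_\ell)$, which is \emph{exactly} Conjecture~\ref{prima:conj}. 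To avoid assuming the full strength of that conjecture I would instead work close to a single vertex: picking a short edge, say $\ell_{12}(\Delta)=\ell$, and a transposition $\tau$ sending the pair $\{1,2\}$ to a pair indexing a strictly longer edge, one uses that the permutation matrix $P_\tau$ satisfies $P_\tau v(t)=v(1-t)$ and $\ell_{12}\circ P_\tau=\ell_{\tau(1)\tau(2)}$ on the segment $v(t)=(1-t)v_0+t v_\tau$, so that $t\mapsto\ell_{12}(v(t))$ and $t\mapsto\ell_{\tau(1)\tau(2)}(v(t))$ are exchanged by the reflection $t\leftrightarrow 1-t$. Feasibility near both endpoints of the segment is then governed by the single number $\tfrac{d}{dt}\ell_{12}(v(t))\big|_{t=0}$, and the problem reduces to estimating this one directional derivative (and, if it vanishes, the corresponding second-order term).

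The computational engine for these estimates is the explicit formula for $\partial\theta_{ij}/\partial\ell_{hk}$ from \cite{Guo} (equivalently, the differential of~\eqref{lengthstoangles}), combined with the strong angle restrictions $0<\theta_{ij}<\pi/2$ and $\theta_{ij}+\theta_{ik}<7\pi/12$ furnished by Proposition~\ref{Angoli-Acuti-et-al} for any $\Delta$ with $\vol(\Delta)\geq\vol(\Delta_{\ell_0})$. I expect this boundary case to be the main obstacle: one must show that \emph{some} admissible direction in the finite cone $K$ prevents every minimal-length edge from shrinking, yet the sign of the relevant length-gradients is not controlled by the only convexity tool at hand, namely the concavity of the volume in the angles, and I know of no analogous concavity of the length functions that would let a secant-type estimate close the argument. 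Since the symmetric candidate $b$ collapses precisely to Conjecture~\ref{prima:conj}, an independent proof is forced to use points near the vertices, where the interplay between the combinatorics of the orbit $\{v_\sigma\}$ and the signs of $\nabla\ell_{ij}(v_0)$ must be resolved directly; this is where I expect the genuine difficulty to concentrate.
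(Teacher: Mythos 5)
The statement you are addressing is Conjecture~\ref{prima2:conj}: the paper does not prove it, it only records that Conjecture~\ref{prima:conj} implies it and that it in turn would yield Conjecture~\ref{main:conj} modulo the existence of a maximum of the volume on the non-compact space $\calT_{\ell}$. So there is no proof in the paper to compare against, and your proposal, as you yourself acknowledge in your final paragraph, does not close the argument either. The interior case you dispose of is correct but essentially content-free: if $\ell_{ij}(\Delta)>\ell$ for all $i<j$ then $\Theta(\Delta)$ is interior to $\Theta(\calT_{\ell})$ (since $L$ and $\Theta$ are charts), the orbit contains a vertex $v_\tau\neq v_0$ because $\Delta$ is non-regular, and any sufficiently short segment towards $v_\tau$ supplies a non-vertex point of $\calO_\Delta$ inside $\Theta(\calT_{\ell})$.

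The genuine gap is the boundary case, which carries all the content of the conjecture. Your reduction to the tangent cone at $v_0$ and the active constraints $S$ is a reasonable framing, but you never produce a direction $d$ in that cone with $\nabla\ell_{ij}(v_0)\cdot d\geq 0$ for all $(i,j)\in S$. The symmetry identity $P_\tau v(t)=v(1-t)$ is correct for a transposition, but it yields no sign information: if $\frac{d}{dt}\ell_{12}(v(t))\big|_{t=0}<0$ the segment exits $\calT_{\ell}$ immediately at $v_0$, and the reflection only tells you that $\ell_{\tau(1)\tau(2)}$ increases into the segment at the other endpoint, which says nothing about $\ell_{12}$ itself or about the other active constraints near $t=1$; when several edges are simultaneously of minimal length, a single transposition cannot even address all active constraints at once. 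The one candidate that symmetry hands you for free, the barycenter $b$, is feasible precisely when Conjecture~\ref{prima:conj} holds, which is also open. So what you have is an honest research plan rather than a proof: the key estimate --- a controlled sign, or second-order expansion, of the length functions along orbit-segment directions, presumably via the derivative formulas of \cite{Guo} and the angle bounds of Proposition~\ref{Angoli-Acuti-et-al} --- is still missing, which is consistent with the statement being left as a conjecture in the paper.
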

 
 Of course, Conjecture~\ref{prima:conj} implies Conjecture~\ref{prima2:conj}. An easy argument exploiting again the concavity of the volume shows that
 Conjecture~\ref{prima2:conj} still implies
 Conjecture~\ref{main:conj}, provided that the volume attains a maximum on the non-compact space $\calT_{\ell}$
 (which
can be probably proved  without much effort).

\bibliographystyle{amsalpha}
\bibliography{bibliovolume}

\end{document}